\theoremstyle{definition}
\newtheorem{definition}{Definition}
\newtheorem{theorem}{Theorem}
\newtheorem{proposition}{Proposition}
\newtheorem{lemma}{Lemma}
\theoremstyle{remark}
\newtheorem{corollary}{Corollary}
\newtheorem{remark}{Remark}
\newtheorem{example}{Example}
\newcommand{\R}{\mathbb{R}}
\newcommand{\G}{\mathcal{G}}
\newcommand{\hl}{h\ell}
\DeclareMathOperator{\bt}{bt}
\title{Properties of leveled spatial graphs}
\author{S. Barthel, F. Buccoliero}
\begin{document}
\maketitle
\begin{comment}
    \begin{abstract}
  Leveled spatial graphs are studied regarding other spatial graph properties, including being free or paneled. New graph invariants are defined for an abstract graph permitting a leveled embedding: the level number and the Hamiltonian level number for Hamiltonian graphs. The level numbers for complete graphs and complete bipartite graphs are obtained.
\end{abstract}
\end{comment}
%\section{Introduction}
\begin{abstract}
We investigate the property of a spatial graph of having a leveled embedding and characterize the abstract graphs with this property. We show that all leveled embeddings are free and we compare leveled and paneled (also known as flat) embeddings.

Two new graph invariants are introduced: the level number, an invariant for graphs that admit a leveled embedding, and the Hamiltonian level number, an invariant for Hamiltonian graphs. These invariants provide a measure on how far a graph is from being planar. We study the relation between the (Hamiltonian) level number and other graph invariants that minimize the decomposition of a graph in planar subgraphs, namely the thickness and the book thickness of a graph. 

We characterize graphs with low level number and determine both the level number and the Hamiltonian level number of complete graphs and of complete bipartite graphs.
\end{abstract}
 
\section{Introduction}
The family of leveled spatial graphs was originally introduced in order to study the possibility of embedding a spatial graph cellular on a surface~\cite{LeveledCellularEmbeddings}. Intuitively, a leveled spatial graph consists of an unknotted cycle with subgraphs attached to it, arranged so that the subgraphs lie on stacked disks whose common boundary is the cycle. %In this paper, we study in which cases a spatial graph admits a leveled embedding. The definitions of the terms used in this section are given in \cref{section: preliminaries}.
%In the present paper, we study in which cases an abstract graph admits a leveled embedding.
Definitions are given in \cref{section: preliminaries}.

We give a new definition of leveled spatial graph and show that it is equivalent to the previous definition (\cref{prop: definitions of leveled are equivalent}).
With the new definition it becomes easier to characterize which abstract graphs, i.e. graphs without a specified embedding, admit a leveled embedding. This is done in \cref{prop: equivalent condition for having a leveled embedding} and \cref{cor: Hamiltonian graphs have a leveled embedding} where we show that every hamiltonian graph can be embedded as a leveled spatial graph.
Moreover, the new definition helps in comparing leveledness with other important spatial graph properties that relate to algebraic properties of the embedding: freeness and paneledness. In \cref{prop: leveled implies free} we show that all leveled spatial graphs are free. 

It is known that the family of paneled spatial graphs is a subfamily of free graph embeddings, since Robertson, Seymour and Thomas proved that a spatial graph is paneled if and only if all its subgraphs are free (\cite{RobertsonSeymourPaneled}). We compare paneledness and leveldness  and show in \cref{thm: paneled and 2connected implies leveled} that if a paneled graph embedding admits a spine, i.e. a cycle such that every fragment is a planar subgraph, then the embedding is leveled. A consequence of this result is \cref{cor: Hamiltonian graphs have a leveled embedding} which states that all Hamiltonian paneled spatial graphs are leveled.

Leveled spatial graphs can be viewed as a generalization of planar graphs, in the sense that a leveled spatial graph consists of stacked planar subgraphs whose union is the whole graph. Because of this interpretation, we define two new graph invariants: the level number and the Hamiltonian level number. These invariants are defined as the lowest number of levels a spatial graph (respectively, a Hamiltonian graph) admits ranging over all of its possible leveled embeddings (respectively, Hamiltonian leveled embeddings). The (Hamiltonian) level number measures how far a graph is from being planar, hence we compare the new invariants with two other invariants which also minimize the decomposition of a graph in planar subgraphs: the thickness~\cite{TutteThicknessofGraphs} and the book thickness~\cite{BookThickness} of a graph. Even though level number, thickness and book thickness are similar, we show in \cref{prop: inequalities concerning thickness level number and book thickness} and the discussion thereafter that they are all distinct.
Finally, we determine the level numbers of complete graphs and of complete bipartite graphs in \cref{prop: level number of Kn} and \cref{thm: level number of Knn}.

\section{Preliminaries}\label{section: preliminaries}
For the concepts not defined here, we refer the reader to books of topological graph theory and of general topology, for example \cite{GrossTuckerBook} and \cite{Munkres}. All graphs in this paper are connected undirected simple finite graphs. %The terms not defined here can be found for example in \cite{MR1367739}.

The \emph{fragments} (or bridges) of a graph~$G$ with respect to a cycle~$C$ of $G$ are the closures of the connected components of $G - C$~\cite{MR81471}. Two fragments are \emph{conflicting} if they either have pairs of endpoints on $C$ that alternate, or have at least three endpoints in common. 
The \emph{conflict graph} of a graph~$G$ with respect to a cycle~$C$ is the graph that has a vertex for each fragment of $G$ with respect to $C$, and two vertices are adjacent if and only if the corresponding fragments conflict.

A connected graph is \emph{Hamiltonian} if it contains a cycle that visits every vertex exactly once. Consequently, all fragments of a Hamiltonian graph with respect to the Hamiltonian cycle are closures of edges or loops. An abstract graph~$G$ is \emph{planar} if it has an embedding on a sphere or a plane. It is \emph{maximal planar} if $G$ is planar but adding any edge to $G$ makes it non-planar. $G$ is \emph{outerplanar} if it can be embedded on the sphere such that all vertices lie on the boundary of one region. The \emph{chromatic number} of a graph~$G$ is the minimum number of colors that are required to color every vertex of $G$ such that adjacent vertices have different colors. 

A \emph{graph embedding} is an embedding $f : G \rightarrow S^3$ of a graph~$G$ in the three-dimensional sphere $S^3$, up to ambient isotopy\footnote{For a formal definition of ambient isotopy we refer the reader to \cite{Munkres}. Informally, an ambient isotopy of a spatial graph allows to continuously deform edges and vertices of the graph along with the surrounding space $S^3$. In this way, self-intersections of the graph and shrinking knots to a point are avoided.}. The corresponding \emph{spatial
graph $\G$} is the image of this embedding up to ambient isotopy. A spatial graph is \emph{trivial} if it embeds in the 2-dimensional sphere~$S^2$.

For $n\ge 1$, an \emph{$n$-book} consists of a line $L$, called the spine of the book, and $n$ distinct half-planes, the \emph{pages} of the book, with boundary $L$. An \emph{$n$-book embedding of a graph $G$} is an embedding of $G$ in an $n$-book such that each vertex of $G$ lies on the spine $L$ and each edge lies in the interior of one page of the book. The vertices of $G$ occur in some order along the spine of the book and this sequence is called the \emph{printing cycle} of the embedding~\cite{BookThickness}.

A spatial graph $\G$ is \emph{paneled} (or \emph{flat}) if every cycle $C$ of $\G$ bounds an embedded open disk in $S^3$ disjoint from $\G$. A spatial graph~$\G$ is \emph{free} if the fundamental group of its complement $\pi_1(S^3 \setminus \G)$ is a free group, i.e. a group whose generators do not have any relations. 

A \emph{diagram} of a spatial graph~$\G$ is the image of a projection of $\G$ onto the Euclidean plane~$\mathbb{R}^2$ such that all intersections in the image are transversal, no vertices and at most two points are mapped to a crossing, and each double point is assigned as an over- or under-crossing. A diagram is \emph{reduced} if no Reidemeister moves of type 1 and 2 can be applied to reduce its number of crossings. These moves are shown in \Cref{fig: reidemeister moves}. A cycle is \emph{unknotted} if it can be embedded as the equator of a sphere~$S^2$.

In our previous work~\cite{LeveledCellularEmbeddings} we defined leveled spatial graphs as follows:
\begin{definition}\label{def:leveled_previous}
 A \emph{leveled spatial graph~$\G$} is a connected spatial graph that contains an unknotted cycle~$C$ called its \emph{spine} such that each fragment~$f$ of $\G$ with respect to $C$ can be embedded in a disk~$D_f$ whose boundary is identified with $C$, with interior $\mathring{D_f}$ disjoint from $\G - f$, in a way that no two disks $D_f$ and $D_g$ for distinct fragments $f$ and $g$ intersect.    
\end{definition}
For the special case of the spine to be a Hamiltonian cycle of the graph, we introduce Hamiltonian leveled spatial graphs:
\begin{definition}\label{def: hamiltonian_leveled}
    A \emph{Hamiltonian leveled spatial graph} $\mathcal{H}$ is a leveled spatial graph whose spine is a Hamiltonian cycle of the underlying abstract graph.
\end{definition}

\begin{figure}
    \centering
    \includegraphics[width=0.5\linewidth]{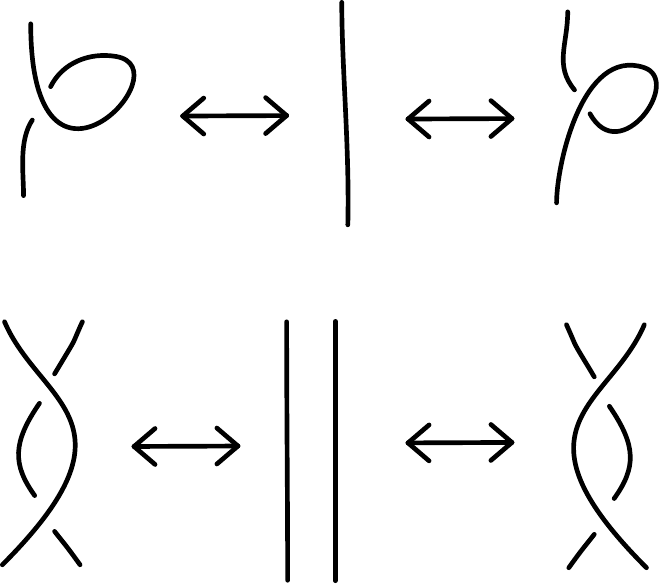}
    \caption{Reidemeister moves of type 1 (above) and type 2 (below).}
    \label{fig: reidemeister moves}
\end{figure}

In the present paper, we define leveled spatial graphs alternatively (\cref{def: leveled embedding}) as the spatial graphs with properties defined in \cref{def: semi-leveled} and \cref{def: divided in levels}. This facilitates the study of leveledness in relation to other spatial graph properties. We prove the equivalence of the two definitions in~\cref{prop: definitions of leveled are equivalent}. 

\begin{definition}\label{def: semi-leveled}
    Let $G$ be a graph. We say that an embedding of $G$ is \emph{semi-leveled} if there exists an unknotted cycle~$C$ in $G$, called the \emph{spine}, and a diagram~$D$ of the embedded graph in which no fragment~$f$ with respect to $C$ has self-crossings, crossings with $C$, or both over- and under-crossings with another fragment. The diagram~$D$ is called a~\emph{semi-leveled diagram} of the semi-leveled spatial graph~$\G$. We say that $\G$ is \emph{semi-leveled with respect to spine $C$} when we want to stress the choice of the spine of the semi-leveled embedding.
\end{definition}

\begin{definition}\label{def: divided in levels}
    Let $\G$ be a semi-leveled spatial graph and let $D$ be a reduced semi-leveled diagram of $\G$, such that every fragment with respect to the spine~$C$ lies in the bounded connected component of $\mathbb{R}^2\setminus C$. We say that $D$ is \emph{divided in levels} if there exists a partition $\mathscr{L}= \{L_1, L_2, \dots, L_n\}$ of all the fragments, called the \emph{level partition}, such that the following holds: $L_1$ is the set of fragments that do not cross over any other fragment, and for $i>1$, the set $L_i$ consists of fragments that cross over at least one fragment $f\in L_{i-1}$ and only cross over fragments belonging to $L_k$, with $k<i$. The fragments belonging to $L_i$ are called \emph{fragments of level~$i$} and $n$ is the \emph{number of levels} of the embedding of $\G$. 
\end{definition}
Note that the requirement that every fragment with respect to the spine $C$ lies in the bounded connected component of $\mathbb{R}^2 \setminus C$ can be obtained without loss of generality since the fragments do not cross the spine $C$ and so they can be considered lying in the bounded component of $\mathbb{R}^2\setminus C$ up to ambient isotopy.
\begin{definition}\label{def: leveled embedding}
    A connected spatial graph~$\G$ has a \emph{leveled} embedding if it has a semi-leveled embedding with a semi-leveled diagram~$D$ that is divided in levels.~(\Cref{fig: example leveled embedding})
\end{definition}

\begin{figure}[h]
    \centering
    \includegraphics[width=0.5\linewidth]{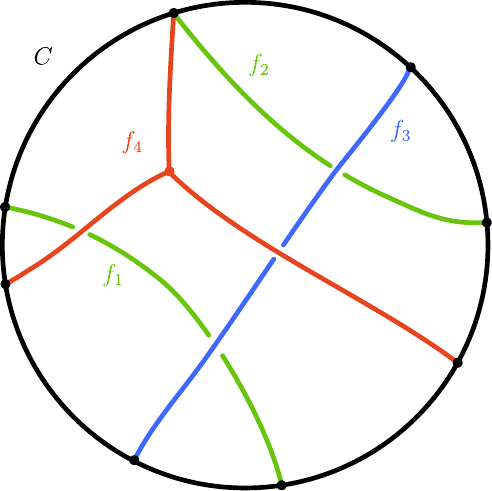}
    \caption{A leveled spatial graph with respect to spine $C$ and three levels. The fragments $f_1$ and $f_2$ are fragments of level 1, while $f_3$ belongs to $L_2$ and $f_4$ to $L_3$.}
    \label{fig: example leveled embedding}
\end{figure}

\begin{remark}
  Clearly, a leveled embedding is semi-leveled but the opposite does not hold true, i.e. there exist semi-leveled diagrams which cannot be divided in levels. Indeed, the spatial graph~$\G$ containing the torus knot~$T(3,2)$ shown in \Cref{fig: spatial graph with torus knot not leveled but semileveled} is semi-leveled but not leveled. Indeed, inspecting the six Hamiltonian cycles of $\G$ as possible spines, we can see that it is either not possible to find a level partition~$\mathscr{L}$ or that there is no semi-leveled diagram.
\end{remark}
\begin{figure}
    \centering
    \includegraphics[width=0.4\textwidth]{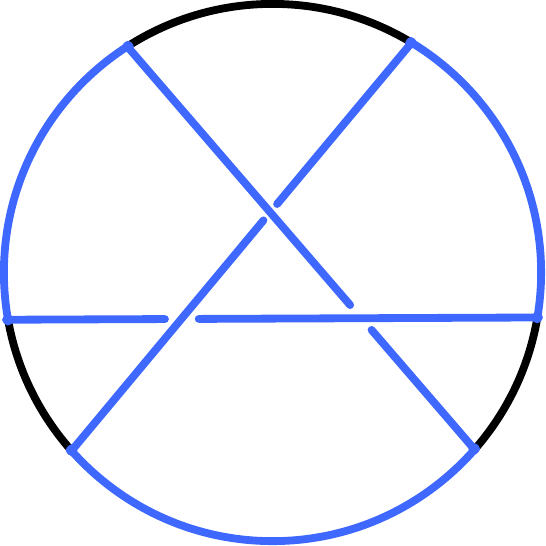}
    \caption{Semi-leveled embedding of a spatial graph containing the torus knot~$T(3,2)$, highlighted in blue. This spatial graph is not leveled.}
    \label{fig: spatial graph with torus knot not leveled but semileveled}
\end{figure}

\begin{proposition}\label{prop: definitions of leveled are equivalent}
    Definition~\ref{def:leveled_previous}  and \cref{def: leveled embedding} are equivalent.
\end{proposition}
\begin{proof}
    If $\G$ has a semi-leveled embedding with semi-leveled diagram~$D$, then every fragment~$f$ with respect to the spine~$C$ is a planar subgraph of $\G$ without self-crossings or crossings with $C$. Therefore, $f$ can be embedded on a disk $D_f$ whose boundary is identified with $C$. Since $D$ is divided in levels, the interior of $D_f$ is disjoint from $\G - f$ and any other $D_g$, for $g$ another fragment of $\G$. 
    For the other implication, order the disks in a sequence $D_1 , \dots, D_d$ such that disks~$D_f$ that together bound a ball in the complement of the graph are labeled consecutively modulo the number of disks~$d$. Consider the diagram~$D$ given by embedding the spine~$C$ on a plane $\mathbb{R}^2$ and projecting the disks $D_f$ to the bounded connected component of $\mathbb{R}^2 \setminus C$. Without loss of generality, the diagram can be assumed to be reduced, and such that the fragment of disk~$D_1$ is not crossing over any other fragment. The diagram~$D$ is semi-leveled, because no fragment has self-crossings, crossings with $C$, or both over- and under-crossings with another fragment by construction. Define a level partition in the following way: Label the fragments of $\G$ by the index of the disks they belong to. 
   Assign $f_1$ and all fragments that do not cross over any other fragment in $D$ to level~1. %Otherwise, being $f_{i_1} \in L_{j_1}, \dots, f_{i_h} \in L_{j_h}$ the fragments conflicting with $f_i$ with index smaller than $i$, assign $f_i$ to $L_m$, where $m= 1+\max(j_1, \dots, j_h).$ 
    Assign to level~2 the fragment with smallest label that conflicts with a fragment assigned to level 1 and does not conflict with fragments assigned to level 2. Continue in the same way until all fragments are assigned to a level.
    The defined partition is a level partition and therefore $D$ is divided in levels.%   Moreover, they define a level partition by considering fragments $f$ and $g$ on the same level if they do not conflict with each other and there is no other disk in between $D_f$ and $D_g$.
\end{proof}

\section{Characterization and stability of leveled embeddings}
In \cref{prop: equivalent condition for having a leveled embedding} we characterize the abstract graphs with a leveled embedding.
\begin{proposition}\label{prop: equivalent condition for having a leveled embedding}
   % For every abstract graph~$G$ which has a cycle~$C$ such that each fragment with respect to $C$ is a planar subgraph, there exists an embedding $\G$ of $G$ in $S^3$ such that $\G := \G(G)$ is a leveled embedding. Moreover, if a graph $G$ has a leveled embedding, then it has a cycle $C$ such that each fragment with respect to $C$ is a planar subgraph by definition. In other words, a
    An abstract graph has a leveled embedding if and only if it has a cycle for which each fragment is planar.
\end{proposition}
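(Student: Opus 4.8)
\emph{Proof idea.} The forward implication is straightforward, and for the converse it is convenient first to reduce the problem to exhibiting a suitable spine.

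For the forward implication, suppose the abstract graph has a leveled embedding, realized by a spatial graph $\G$ with a semi-leveled diagram $D$ and spine $C$. By \cref{def: semi-leveled} no fragment of $\G$ with respect to $C$ has self-crossings, so erasing from $D$ every fragment other than a fixed fragment $f$ leaves a crossing-free diagram of $f$; hence $f$ is planar, and $C$ is a cycle of the underlying graph all of whose fragments are planar.

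For the converse, let $C$ be a cycle of a graph $G$ for which every fragment is planar, and call a cycle $C'$ of $G$ a \emph{good spine} if every fragment of $G$ with respect to $C'$ embeds in a closed disk with boundary identified with $C'$, its attachment vertices lying on $C'$ in the cyclic order induced by $C'$. If $G$ has a good spine $C'$, then it has a leveled embedding: realize $C'$ as a round, hence unknotted, circle in $S^3$, which bounds finitely many disks with common boundary $C'$ that are pairwise disjoint away from $C'$, one for each fragment; placing each fragment inside its disk via the given disk embedding, with its attachment vertices on $C'$, yields a spatial graph with underlying graph $G$ in which distinct disks meet only along $C'$ and each disk's interior avoids the rest of the graph --- that is, a leveled spatial graph in the sense of the definition given in the text above \cref{def: semi-leveled} --- so by \cref{prop: definitions of leveled are equivalent} the graph $G$ has a leveled embedding. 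It therefore suffices to build a good spine out of $C$.

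To do so I would reroute $C$ through its fragments, one at a time. A chord-fragment is automatically disk-embeddable, but a general planar fragment need not be --- for instance, the planar graph obtained from $K_4$ by attaching a pendant edge at each of its vertices does not embed in a disk with its four pendant vertices on the boundary --- so the cycle must in general be changed. As long as the current spine has a fragment $f$ that is not disk-embeddable in the induced cyclic order, I fix a planar embedding of $f$ and replace one of the arcs of the spine between two consecutive attachment vertices of $f$ by a path running through $f$ (in particular through vertices of $f$ lying off the spine), choosing the arc and the path so that every edge and vertex of $f$ left unused forms, together with whatever hangs off it, a piece contained in a disk bounded by a sub-arc of the new spine --- chords, subdivided chords, and small tree-like pieces whose attachment vertices appear, by construction, in the cyclic order induced by the new spine. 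Iterating such moves to termination produces a good spine, which completes the proof.

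The rerouting is the crux. Replacing an arc $\alpha$ of the spine by a path through $f$ fuses $\alpha$, together with every fragment attached along it, into a single new fragment, so at each step one must choose the fragment to route through, the arc to replace, and the path through the fragment carefully --- so that all the fragments (the fused one included) remain planar at every stage, so that genuine progress toward a good spine is made, and so that the process terminates. Controlling which fragments are fused, maintaining planarity throughout, and securing the correct cyclic order of attachment vertices on the final disks is where the real work lies; the remaining reduction, from good spines to leveled embeddings, is routine.
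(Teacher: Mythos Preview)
Your forward direction matches the paper's. For the converse the paper does something far simpler than you do: it uses the given cycle $C$ itself as the spine, embeds each planar fragment $f_i$ on its own disk $D_{f_i}$ with $\partial D_{f_i}$ identified with $C$, and stacks these disks so their interiors are pairwise disjoint. There is no rerouting and no intermediate notion of ``good spine''.

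You are right to flag a subtlety the paper glides over: a planar fragment need not embed in a disk with its attachment vertices on the boundary in the cyclic order dictated by $C$. Your $K_4$-with-four-pendants example is genuine --- if $C$ is a $4$-cycle on the pendant leaves, then $f\cup C$ has a $K_5$ minor (contract the cycle to a single vertex), so no such disk embedding exists. Thus the paper's phrase ``which is possible because $f_i$ is a planar graph'' is, read literally, too quick, and your instinct that the spine may need to be changed is sound.

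That said, your proposed fix is not a proof. The rerouting move replaces an arc of the current spine by a path through a bad fragment; as you note, this fuses that arc together with every fragment attached along it into one new fragment. You give no argument that this fused fragment is planar, no invariant or measure that decreases to guarantee termination, and no verification that the limiting cycle is actually a good spine. Any one of these can fail for a careless choice of arc and path, and you explicitly defer ``controlling which fragments are fused, maintaining planarity throughout, and securing the correct cyclic order'' to unspecified future work. Since that deferred work is precisely the content of the converse implication, what you have written is a plausible strategy rather than a proof.
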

\begin{proof}
It is clear by definition that a graph that admits a leveled embedding contains a cycle for which each fragment is planar. For the other implication, let $f_1, \dots, f_n$ be the planar fragments of $G$ with respect to $C$. Embed $C$ unknotted and choose it as the spine. Embed $f_i$ for each $i$ on a disk $D_{f_i}$, which is possible because $f_i$ is a planar graph. Identify the boundary of $D_{f_i}$ with $C$ for every $i$ in such a way that the interior of $D_{f_i}$ does not intersect the interior of $D_{f_j}$ for any $j\neq i$ which is possible by stacking the disks above each other.
\end{proof}
\begin{corollary}\label{cor: Hamiltonian graphs have a leveled embedding}
    Every Hamiltonian graph has a leveled embedding.
\end{corollary}

It follows from \cref{def: leveled embedding}, that an abstract graph that does not contain a cycle such that every fragment is a planar subgraph, cannot be embedded semi-leveled.
\begin{figure}
    \centering
    \includegraphics[width=0.8\textwidth]{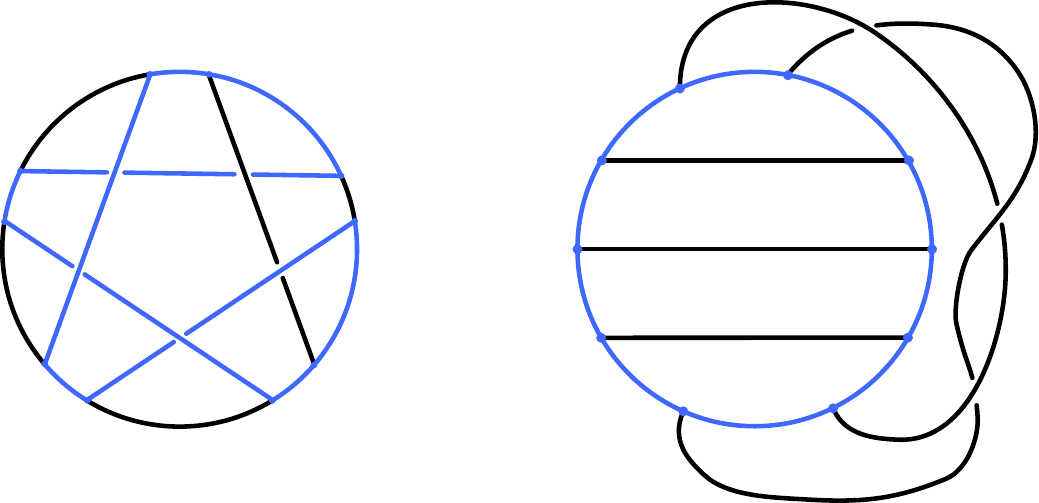}
    \caption{Left: a Hamiltonian leveled spatial graph with a selected cycle in blue. Right: the embedding with the selected cycle as spine is not semi-leveled and hence leveled because two fragments cross over and under each other.}
    \label{fig: changing spine to knotted Hamiltonian makes it non leveled}
\end{figure}

Leveled embeddings are not ``stable" in the sense that not all cycles of the graph can be chosen as spines, not even in the case that all fragments with respect to the spine are trivial.
%for a spatial graph with leveled embedding $\G$, changing the spine does not guarantee that the new embedding with respect to the new spine is again leveled.
For example, on the left of \Cref{fig: changing spine to knotted Hamiltonian makes it non leveled} a Hamiltonian leveled spatial graph is shown. 
The blue Hamiltonian cycle cannot act as a spine since all diagrams in which the blue Hamiltonian does not have self-crossings, two of its fragments have both over- and under-crossings.
 
%If the blue Hamiltonian cycle is chosen as new spine, we obtain the spatial graph on the right, which is not leveled as two fragments cross over and under each other.

The problem in \Cref{fig: changing spine to knotted Hamiltonian makes it non leveled} is that the spatial graph contains a knot.
%which makes the embedding with the new spine not leveled.
However, we can still find examples of knotless leveled spatial graphs with a cycle whose all fragments are trivial, such that the cycle cannot act as spine, see~\Cref{fig: changing spine to knotless Hamiltonian makes it non leveled}.
It is known that changing the spine of a leveled embedding affects its number of levels \cite{LeveledCellularEmbeddings}. 
\begin{figure}[h]
    \centering
    \includegraphics[width=0.8\textwidth]{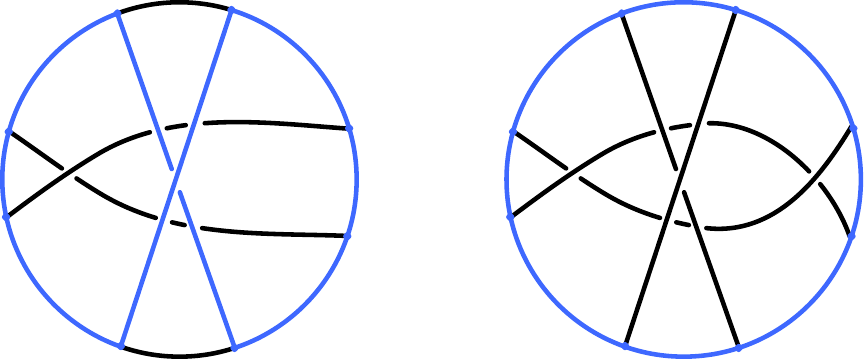}
    \caption{Left: a knotless Hamiltonian leveled spatial graph with a selected cycle in blue. Right: the selected cycle cannot be chosen as a spine because it forces two fragments to cross over and under each other.}
    \label{fig: changing spine to knotless Hamiltonian makes it non leveled}
\end{figure}

\section{Leveled spatial graphs are free}
In \cref{prop: leveled implies free} we prove that the family of leveled embeddings is contained in the family of free embeddings. To prove this result, we need first to show a sufficient condition for a spatial graph to be free.
A \emph{bouquet} is a graph consisting of a single vertex. The graph $G/e$, where $e$ is an edge of $G$ with distinct endpoints, is obtained form $G$ by contracting the edge $e$ and letting its endpoints coincide. In a similar fashion, the graph $G/T$, where $T$ is a spanning tree of $G$, is obtained from $G$ contracting each edge of $T$ one at a time.
\begin{lemma}\label{lem: if bouquet is trivial then free}
    Let $\G$ be a spatial graph. If there exists a spanning tree~$T$ such that the bouquet $\G/T$ that is obtained by contracting $T$ is a trivial spatial graph, then $\G$ is free.
\end{lemma}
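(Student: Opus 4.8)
The plan is to replace $\G$ by a regular neighborhood and exploit that contracting a spanning tree does not change the ambient isotopy class of this neighborhood. Let $N$ be a regular neighborhood of $\G$ in $S^3$. Since $\G$ is connected, $N$ is a handlebody whose genus $g$ equals the first Betti number of $\G$, and $S^3\setminus\G$ deformation retracts onto the compact manifold $X:=S^3\setminus\mathring{N}$; hence $\pi_1(S^3\setminus\G)\cong\pi_1(X)$, and it suffices to prove that $\pi_1(X)$ is free.

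\emph{Step 1: $N$ is ambient isotopic to a regular neighborhood of the spatial graph $\G/T$.} A regular neighborhood $B:=N(T)$ of the tree $T$ inside $N$ is a $3$-ball, and each non-tree edge of $\G$ meets $B$ in arcs having one endpoint on $\partial B$. Contracting $T$ is realized by collapsing $B$ to a point while combing these arcs down along $B$; this is a deformation supported in $N$ carrying $\G$ to an embedded copy $\G'$ of the abstract bouquet $G/T$. Since $N$ is a handlebody in which $\G'$ is a spine, the regular neighborhood of $\G'$ taken inside $N$ is isotopic to $N$ itself. By the definition of edge contraction for spatial graphs, $\G'$ is ambient isotopic in $S^3$ to $\G/T$, so by the uniqueness of regular neighborhoods $N$ is ambient isotopic to a regular neighborhood $N(\G/T)$ of $\G/T$.

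\emph{Step 2: conclude using triviality.} By hypothesis $\G/T$ is trivial, hence it lies in a $2$-sphere $S^2\subset S^3$. Thickening $\G/T$ first inside $S^2$ to a planar regular neighborhood $R$ (a sphere with $g+1$ boundary circles, since the connected graph $G/T$ has $g$ independent cycles) and then in the normal direction exhibits $N(\G/T)$ as a standardly embedded genus-$g$ handlebody $R\times[-1,1]$ straddling $S^2$. Its complement in $S^3$ is obtained by gluing the two $3$-balls $B_{\pm}\setminus\mathring{N}(\G/T)$ along the $g+1$ disjoint disks $S^2\setminus\mathring{R}$, and gluing two $3$-balls along $g+1$ disjoint boundary disks produces a genus-$g$ handlebody. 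Combining with Step 1, $X$ is a genus-$g$ handlebody, so $\pi_1(X)$ is free (of rank $g$); therefore $\pi_1(S^3\setminus\G)$ is free and $\G$ is free.

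I expect Step 1 to be the crux. One must carefully justify that combing the non-tree edges down along $T$ yields precisely the spatial graph $\G/T$ up to ambient isotopy, rather than merely \emph{some} embedding of the abstract graph $G/T$, and that this combing keeps $N$ a regular neighborhood throughout; the rest rests on the regular neighborhood uniqueness theorem and the elementary identification of the complement of a standard handlebody, both of which are routine.
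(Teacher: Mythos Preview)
Your proof is correct and follows the same essential idea as the paper: contracting the spanning tree does not change the complement, so $\G$ is free because $\G/T$ is. The paper dispatches this in two sentences, merely asserting that contraction (and its inverses, edge subdivision and vertex splitting) preserve the fundamental group of the complement of a neighborhood of the graph; you unpack this claim via regular neighborhoods and in passing establish the stronger statement that the complement of $\G$ is itself a handlebody.
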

\begin{proof}
    If $\G/T$ is trivial, then it is free. Since the contraction of $T$ can be reversed by subdivision of edges and splittings of vertices and these do not change the fundamental group of the complement of a neighborhood of the graph, $\G$ is also free.
\end{proof}
The converse is not true. Indeed, consider the trefoil knot with one unknotting tunnel drawn in~\Cref{fig: torus knot with unknotting tunnel}. This is a free spatial graph. Since the contraction of edges does not change the complement of the spatial graph, contracting an edge which is part of the knot results in a free spatial graph that is not trivial. However, the only spanning tree of the obtained graph is its vertex and therefore no spanning tree can be found that could be contracted to obtain a trivial graph.
%A converse that works is the following.

\begin{figure}[h]
    \centering
    \includegraphics[width=0.8\textwidth]{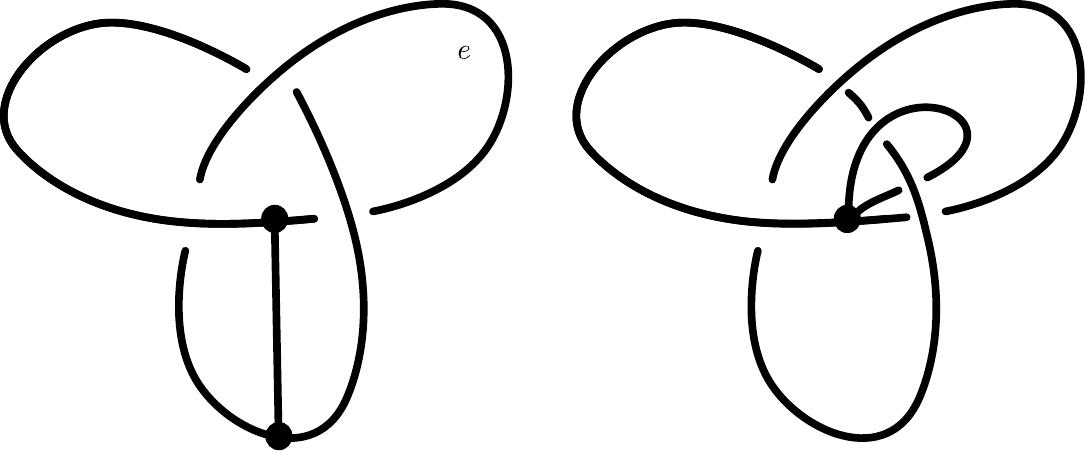}
    \caption{Left: the torus knot $T(3,2)$ with an unknotting tunnel is a free spatial graph. Right: after contracting the edge $e$, the spatial graph remains free.}
    \label{fig: torus knot with unknotting tunnel}
\end{figure}
However, Makino and Suzuki showed that there exists a sequence of edge contractions and vertex splittings between any pair of spatial graphs that have the same neighbourhood~\cite{makino1995}, where the neighbourhood of a spatial graph $\G$ is a tubular neighbourhood of its edges glued to tubular neighbourhoods of its vertices in such a way that $\G$ is embedded in the neighbourhood. This allows us to state the following:
\begin{remark}
    If a spatial graph~$\G$ is free, then there exists a sequence of vertex splittings and edge contractions producing a spatial graph~$\tilde{\G}$ such that $\tilde{\G}$ has a spanning tree~$T$ such that $\tilde{\G}/T$ is trivial.
\end{remark}
\begin{proposition}\label{prop: leveled implies free}
    If $\G$ is a leveled spatial graph, then $\G$ is free.
\end{proposition}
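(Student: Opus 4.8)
The plan is to prove that the exterior $S^3\setminus N(\G)$ of a regular neighbourhood of $\G$ is a handlebody; since a handlebody has free fundamental group, this shows $\pi_1(S^3\setminus\G)$ is free. (Equivalently, one can route the argument through \cref{lem: if bouquet is trivial then free}: contract the spanning tree consisting of all but one edge of the spine together with a spanning forest of $\G$ inside each fragment, and check that the resulting bouquet is trivial. The geometric content is the same handlebody computation, so I will carry out the handlebody version directly.)

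First I would record the data supplied by a leveled embedding: an unknotted spine~$C$ and disks $D_{f_1},\dots,D_{f_n}$ with $\partial D_{f_i}=C$, pairwise disjoint interiors, each $\mathring D_{f_i}$ disjoint from $\G-f_i$, and $f_i\subseteq D_{f_i}$. Put $Y=C\cup\bigcup_i D_{f_i}$. The first step is to show that $S^3\setminus N(Y)$ is a disjoint union of $n$ three-balls $B_1,\dots,B_n$. Since $C$ is unknotted, $\bar W:=S^3\setminus N(C)$ is a solid torus, and for each $i$ the disk $D_{f_i}\cap\bar W$ is a properly embedded disk whose boundary is a pushoff of $C$ along $D_{f_i}$, i.e.\ an essential simple closed curve on $\partial\bar W$; hence it is a meridian disk of $\bar W$. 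These $n$ meridian disks are disjoint, therefore parallel, and cutting $\bar W$ along them yields $n$ balls.

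Next I would reconstruct $S^3\setminus N(\G)$ from $\bigsqcup_i B_i$ by gluing back $N(Y)\setminus N(\G)$ (choosing $N(\G)\subseteq N(Y)$). This region deformation retracts onto $Y\setminus\G=\bigsqcup_i(\mathring D_{f_i}\setminus f_i)$, the union of the complementary regions of the planar graph $f_i$ inside its page $D_{f_i}$. Because $f_i$ is connected and meets $\partial D_{f_i}=C$, every such region is an open disk, and its thickening transverse to the page is a three-ball glued to the two balls lying on the two sides of the page $D_{f_i}$ along two disjoint sub-disks of its boundary — i.e.\ a $1$-handle. Thus $S^3\setminus N(\G)$ is a disjoint union of balls with $1$-handles attached, hence a handlebody; as a sanity check its genus comes out to $|E(\G)|-|V(\G)|+1$, matching $b_1(S^3\setminus\G)$ by Alexander duality.

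The main obstacle is the bookkeeping in this last step: verifying that regluing the thickened complementary regions really is a succession of $1$-handle attachments and never produces $2$-handles or worse. This requires inspecting the local picture where a region pinches against the spine~$C$ or against a vertex or a multiple edge of $f_i$; in each case the region is still a disk attached to its two adjacent balls along two disjoint sub-disks of its boundary. The minor lemma that a connected planar graph drawn in a closed disk and meeting the boundary circle has only disk complementary regions should be recorded along the way.
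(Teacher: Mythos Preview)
Your argument is correct and takes a genuinely different route from the paper's. The paper proceeds via \cref{lem: if bouquet is trivial then free}: it first reduces to the hamiltonian leveled case by contracting each fragment down onto the spine, then takes the spanning tree $T=C-e$, and argues that the bouquet $\G/T$ is trivial by checking it is paneled (using the halves $D_{f,1}$ of the page-disks together with a disk $D_C$ bounded by $C$) and invoking the Robertson--Seymour--Thomas result that a paneled embedding of a planar graph is trivial. You instead exhibit the complement as a handlebody directly: the pages cut the solid torus $S^3\setminus N(C)$ into balls, and the complementary faces in each page supply a complete system of properly embedded disks for $S^3\setminus N(\G)$, so that cutting along them returns those balls. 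Your approach is more self-contained---no appeal to \cite{RobertsonSeymourPaneled} is needed---and yields the handle decomposition and the expected genus explicitly; the paper's approach has the virtue of tying the result into the paneled/trivial framework already in play and of reusing \cref{lem: if bouquet is trivial then free} as a general-purpose tool. The bookkeeping you flag near the binding is real but routine: since each fragment is connected and meets $C$, the graph $f_i\cup C$ is connected in $D_{f_i}$, so every component of $D_{f_i}\setminus(f_i\cup C)$ is an open disk, and its closure in $S^3\setminus\mathring N(\G)$ is a properly embedded disk regardless of how many times its boundary walk revisits $C$ or a vertex of $f_i$.
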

\begin{proof} 
    Given that $\G$ is leveled, we can embed the fragment $f$ of $\G$ in a disk $D_f$ such that $\partial D_f$ coincides with $C$ and the interior of $D_f$ does not intersect $\G$ except in $f$. If the fragment~$f$ contains a vertex of $\G$ that is not a vertex of the spine, we can contract edges of $f$ inside $D_f$ until there are no vertices of $f$ outside the spine. As the obtained spatial graph~$\G'$ is Hamiltonian leveled, we can again embed each fragment~$f'$ of $\G'$ on a disk~$D_f'$ disjoint from $\G' - f'$. 
    Therefore, we can assume $\G$ to be Hamiltonian leveled without loss of generality. Note that for a Hamiltonian leveled graph~$\G$, $f$ splits $D_f$ into two disks, $D_{f, 1}$ and $D_{f,2}$. Moreover, since $C$ is unknotted, there exists a disk~$D_C$ such that $\partial D_C$ coincides with $C$ and the interior of $D_C$ does not intersect $\G$.
    By~\cref{lem: if bouquet is trivial then free}, it is enough to find a spanning tree~$T$ such that~$\G/T$ is trivial to prove the statement. Let $C$ be the spine of $\G$ and let $e$ be any edge in $C$.
    Consider $T = C - e$, then $T$ is a spanning tree for $\G$.
    Contracting $T$, the boundary of $D_f$ becomes the edge $e$, while the fragment $f$ becomes a loop based at the only vertex of the bouquet $\G/T$. Only one of $D_{f,1}, D_{f,2}$ remains a disk, without loss of generality we can assume it is $D_{f,1}$ for every fragment $f$, while the other becomes a pinched disk. In a similar way, $D_C$ remains a disk with boundary given by $e$. As the collection of disks $D_C, D_{f,1}$ for each fragment $f$ of $\G$ have interiors which are pairwise disjoint, the spatial graph $\G/T$ is paneled. By (2.2) of \cite{RobertsonSeymourPaneled}, $\G/T$ is trivial since it is a paneled embedding of a planar abstract graph. Therefore, $\G$ is free.
\end{proof}

\section{Relation between leveled and paneled spatial graphs}
In this section we compare leveled and paneled spatial graphs. We find that neither property implies the other, but that they are closely related as specified in \cref{thm: paneled and 2connected implies leveled}.

There are many leveled embeddings that are not paneled, given that
a leveled embedding containing a knot is certainly not paneled. As any knot is contained in the leveled embedding that is obtained by considering a book representation of the knot and adding the edges of the spine of the book to the knot, there exists an infinite family of leveled embeddings which are not paneled. 

A paneled embedding of a spatial graph $\G$ is not necessarily semi-leveled. 
Indeed, consider two copies of $K_{3,3}$ connected by an edge, as in \Cref{fig: connected K33}. The embedding is paneled, because $K_{3,3}$ is paneled, as shown by Robertson, Seymour and Thomas (\cite{RobertsonSeymourPaneled}). However, the embedding is not semi-leveled, because at least one fragment with respect to any cycle is non-planar.
\begin{figure}
    \centering
    \includegraphics[width=0.8\textwidth]{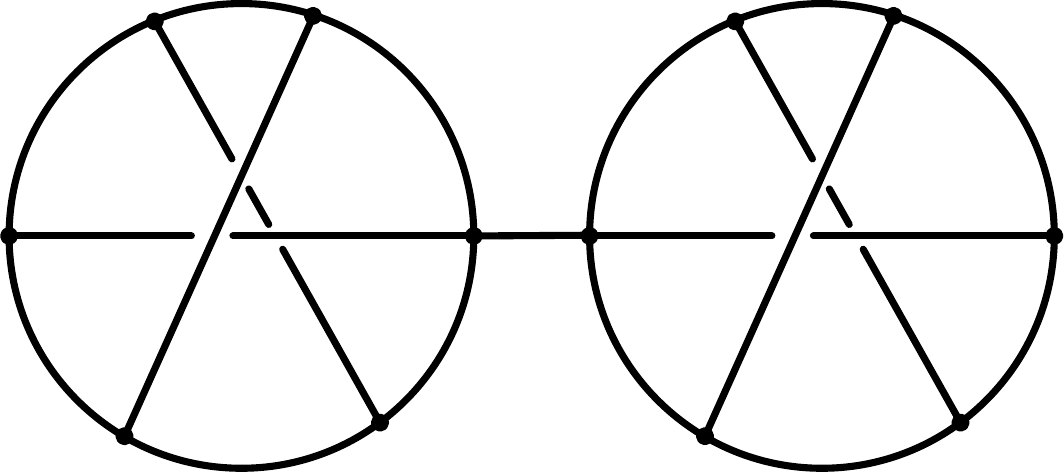}
    \caption{Two copies of $K_{3,3}$ connected by an edge have a paneled embedding but no semi-leveled embedding.}
    \label{fig: connected K33}
\end{figure}

Nonetheless the two properties combined are enough to imply leveledness.
    
\begin{theorem}\label{thm: semileveled and paneled implies leveled}
    If the embedding of a spatial graph $\G$ is semi-leveled and paneled, then it is leveled.
\end{theorem}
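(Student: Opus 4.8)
The plan is to verify the original definition of a leveled spatial graph directly: by that definition together with \cref{prop: definitions of leveled are equivalent}, it suffices to exhibit, using the spine $C$ that witnesses semi-leveledness (which is unknotted by definition), a family of embedded disks $D_f \subset S^3$, one per fragment $f$, with $\partial D_f = C$, with $f \subset D_f$ and $\mathring D_f \cap (\G - f) = \emptyset$, and with the interiors of distinct $D_f$ disjoint. First I would reduce to the hamiltonian case. Contracting, one at a time, the edges of a fragment not incident to $C$ until no fragment has a vertex off $C$ turns $\G$ into a spatial graph $\G'$ whose vertices all lie on $C$ and whose fragments are chords $e_1,\dots,e_n$. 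This preserves semi-leveledness, since such a contraction takes place inside the region a single fragment occupies in a semi-leveled diagram and changes neither its lack of self-crossings and crossings with $C$ nor the over/under pattern between distinct fragments; and it preserves paneledness, since a cycle of $\G'$ is the image of either a cycle of $\G$ or a path of $\G$ that closes up through the contracted edge, and a panel for that cycle descends to a panel of $\G'$ once the relevant boundary arc is collapsed (collapsing a boundary arc of a disk leaves a disk). A leveled embedding of $\G'$ then yields one of $\G$ by undoing the contractions via vertex splittings performed inside the region of a single fragment; that these splittings preserve the existence of a defining disk system — here one uses that $\G$ was semi-leveled, so no pathology hides inside a fragment — is a point I would need to check. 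So I may assume $\G$ is hamiltonian with chord fragments $e_1,\dots,e_n$.

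Next, for each chord $e_i$, whose endpoints split $C$ into arcs $P_i$ and $P_i'$, paneledness gives embedded disks $\Delta_i$ and $\Delta_i'$ with $\partial\Delta_i = e_i\cup P_i$, $\partial\Delta_i' = e_i\cup P_i'$, and interiors disjoint from $\G$. A position argument — the open disks miss $\G$, while the two boundary curves meet only in $e_i$ — shows $\Delta_i\cap\Delta_i'$ consists of $e_i$ together with finitely many circles lying in $S^3\setminus\G$; removing those by innermost-disk surgeries in $S^3\setminus\G$ I may take $\Delta_i\cap\Delta_i' = e_i$, and then $D_i := \Delta_i\cup\Delta_i'$ is an embedded disk with $\partial D_i = C$ in which $e_i$ is a properly embedded arc and, crucially, $\mathring D_i\cap\G = \mathring e_i$. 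It remains to make the $D_i$ pairwise disjoint in their interiors. Because $\mathring D_i\cap\G = \mathring e_i$ and distinct edges meet only at shared endpoints, the intersection curves $\mathring D_i\cap\mathring D_j$ avoid $\G$ entirely, hence avoid every seam $e_k$; and since all the $D_i$ have boundary exactly $C$, a small isotopy near $C$ fixing $\G$ arranges that $D_i$ and $D_j$ meet only along $C$ in a neighbourhood of $C$, so $D_i\cap D_j = C\sqcup(\text{finitely many circles in }S^3\setminus\G)$. The usual innermost-disk argument — carried out in $S^3\setminus\G$, and with the innermost disk always taken on the side of its bounding circle away from the seam (possible since the seams are disjoint from all these circles) — removes the circles while keeping each $D_i$ an embedded disk bounded by $C$, still containing $e_i$, and still with $\mathring D_i\cap\G = \mathring e_i$. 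The resulting family $\{D_i\}$ satisfies the definition of a leveled embedding, so $\G$ is leveled.

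The mechanism that makes all the surgeries harmless is the identity $\mathring D_i\cap\G = \mathring e_i$ secured in the third step, which confines every intersection that must be removed to the complement of the graph; the main obstacle is the bookkeeping in that step and in the final disjointification — guaranteeing that the surgery disks can always be chosen to miss the seams and that the re-glued pieces remain embedded disks bounded by $C$ — together with the check that the reduction to the hamiltonian case can be undone without losing leveledness. (I also considered arguing directly that a crossing-minimal reduced semi-leveled diagram has acyclic over-digraph, by isotoping a chord lying on a directed cycle of the over-digraph across one of its panels; but such an isotopy can introduce new crossings with chords attached along the interior of the relevant spine arc, so its net effect on the crossing number is unclear, and the disk approach above sidesteps this.)
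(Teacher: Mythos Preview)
Your approach is genuinely different from the paper's. The paper argues by contrapositive: if a reduced semi-leveled diagram cannot be divided into levels, there is a cyclic sequence of fragments $f_1<\cdots<f_t<f_1$ (where $<$ means ``crosses under''), with $t\ge 3$ by semi-leveledness; after pruning to edges $e_1<\cdots<e_t<e_1$ in which $e_i$ meets only $e_{i\pm 1}$, the paper shows for $t\ge 4$ that consecutive edges cross exactly once, producing a $T(t,2)$ torus knot or link, while $t=3$ yields a six-vertex nine-edge subgraph that is either planar (hence non-trivially embedded, so not paneled) or $K_{3,3}$ (whose two paneled embeddings are inspected and seen to be leveled). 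Either way $\G$ is not paneled. Your direct disk construction is closer in spirit to the paper's proof of \cref{thm: paneled and 2connected implies leveled}, which invokes B\"ohme's lemma in place of surgery; your route is more geometric and case-free, the paper's avoids building any disks at all.

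For the hamiltonian case your argument is essentially correct. The mechanism you identify --- that an intersection circle $\gamma\subset\mathring D_i\cap\mathring D_j$ is disjoint from $\G$, so on each disk it bounds a subdisk that cannot contain the properly embedded arc $e_i$ (resp.\ $e_j$), whose endpoints lie on $\partial D_i=C$ --- is exactly what keeps every surgery from disturbing the seams; this handles both sides of each swap at once, so your remark about choosing ``the side away from the seam'' is unnecessary. You should, however, make the multi-disk induction explicit: process the $D_i$ in order, always surgering the current disk using innermost subdisks of the already-processed ones; since those are pairwise interior-disjoint by induction, surgery against one cannot reintroduce intersections with another.

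The genuine gap is the reduction step, and it is more than the routine check you flag. Contracting the interior of a fragment $f$ turns it into \emph{several} chords of $\G'$, which are then \emph{distinct} fragments of $\G'$; your construction assigns them separate, interior-disjoint disks. But levelling $\G$ requires a \emph{single} disk $D_f$ containing all of $f$, and nothing in your argument merges the chord-disks back together when you undo the contractions --- the leveled diagram you produce for $\G'$ may well place chords descending from a common $f$ in different levels. The cleaner fix is to bypass the reduction and build one disk per fragment of $\G$ directly: semi-leveledness makes each $f\cup C$ a trivial spatial subgraph, so it sits on an embedded $S^2$ whose cells (other than the one bounded by $C$) have boundaries that are cycles of $\G$; replacing those cells by panels assembles a disk $D_f$ with $\partial D_f=C$ and $\mathring D_f\cap\G=f\setminus C$, after which your surgery argument disjoins the $D_f$ (the seam $f$ is still connected and attached to $\partial D_f$, so the same enclosure argument applies). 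Making the panels for a single $f$ glue up into an embedded disk is itself a non-trivial step --- this is precisely where the paper's \cref{thm: paneled and 2connected implies leveled} leans on B\"ohme's lemma.
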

\begin{proof}
    We will prove that if a spatial graph $\G$ is semi-leveled for a reduced diagram $D$, and $D$ is not divided in levels, then $\G$ is not paneled.
            
    We can suppose without loss of generality that $D$ is reduced and that all fragments with respect to the spine $C$ lie in the bounded connected component of $\R^2 \setminus C$, because we can apply Reidemeister moves locally away from $C$ and fragments do not cross $C$.
            
    A semi-leveled diagram $D$ is not divided in levels if and only if a level partition $\mathscr{L}$ does not exist. 
    This is the case if and only if there exists a fragment $f$ such that $f \notin L_i$ for every $i$. Note that, even though $\mathscr{L}$ does not exist, $L_i$ can still be defined for every $i$. This means that $f$ crosses over another fragment $g$ that itself does not belong to $L_i$ for every $i$. Indeed, $f$ must cross another fragment $g$ since otherwise $f$ could be assigned to $L_1$. Furthermore, $g \notin L_i$ for every $i$, since if all fragments that are overcrossed by $f$ belong to one of $L_{j_1}, \dots, L_{j_k}$, then $f$ would belong to $L_{\max(j_1, \dots, j_k)+1}$, and given that $f$ does not belong to any $L_i$, neither does $g$. By the same argument, $g$ crosses over another fragment $h$ which does not belong to $L_i$ for every $i$. Since the number of fragments of $\G$ is finite, there exists a sequence of fragments $f_1, \dots, f_t$ such that $f_1 <f_2<\dots<f_t<f_1$; where $f_a<f_b$ if the fragment $f_a$ crosses under fragment $f_b$ non-trivially (meaning that no Reidemeister moves of type~2 can be applied to remove the crossing between $f_a$ and $f_b$).
            
    Note that the sequence of fragments must contain at least three fragments because there cannot be two fragments which cross both over and under each other since $\G$ is semi-leveled.
            
    If the fragment $f_i$ in the sequence $f_1, \dots, f_t$ crosses under the fragment $f_k$, with $k >i+1$, then we can reduce the sequence by removing all fragments from $f_{i+1}$ to $f_{k-1}$. In this way, we obtain a sequence $f_1 < f_2 < \dots < f_t < f_1$ where $f_i$ does not cross under any fragment other than $f_{i+1}$. In the same way, $f_t$ crosses under only $f_1$. This also implies that $f_i$ crosses over only $f_{i-1}$ for $i=2, \dots, t$ and $f_1$ crosses over only $f_t$. Given that fragments are connected, we can restrict the sequence of fragments to sequence of edges $e_1, \dots, e_t$ with $e_1< \dots < e_t<e_1$, where $e_i$ crosses under only $e_{i+1}$ (modulo $t$) and where $e_a<e_b$ if the edge $e_a$ crosses under edge $e_b$ non-trivially (meaning that no Reidemeister moves of type 2 can be applied to remove the crossing between $e_a$ and $e_b$).

    We now prove that the subdiagram $\tilde{D}\subset D$ that is formed by the spine of $D$ and the edges of the sequence  $e_1, \dots, e_t $ from above depicts a nontrivial knot or link for $t\ge 4$, implying that $\G$ cannot be paneled. We deal with the case $t=3$ later.
    
    Suppose that the sequence $e_1< \dots < e_t<e_1$ contains at least 4 edges, i.e. $t \ge 4$. Consider edge $e_i$. The labeling is always modulo $t$ in the following. We know that $e_i$ only has crossings with the edges $e_{i+1}$ and $e_{i-1}$. We claim that $e_i$ crosses exactly once over $e_{i-1}$. 
    
    Suppose that $e_{i-1}$ crosses under $e_i$ at least twice. This means that there exists at least one region in the diagram $\tilde{D}$ formed by a superarc of $e_i$ and a subarc of $e_{i-1}$ (two such regions are highlighted with a red circle in \Cref{fig: 2edges 3crossings}). In order for the diagram to be reduced, another edge~$e$ would need to cross this region such that $e$ crosses over $e_{i}$ and under $e_{i-1}$, since otherwise a Reidemeister move of type~2 can be performed to reduce the crossings between $e_i$ and $e_{i-1}$. Since $e_{i-1}$ only has crossings with $e_{i}$ and $e_{i-2}$, $e$ must equal $e_{i-2}$. Also, since $e_{i}$ only has crossings with $e_{i-1}$ and $e_{i+1}$, $e$ must equal $e_{i+1}$. Since $t\ge 4$, $e_{i-2} \neq e_{i+1}$ and no edge~$e$ exists. Therefore, a Reidemeister move of type~2 can be performed to remove the region. Applying the same reasoning to every region formed between $e_i$ and $e_{i-1}$ concludes that these fragments cross each other exactly once.

    Thus, if $t \ge 4$, $e_i$ crosses under $e_{i+1}$ exactly once. Up to mirror image, this means that we obtain an embedding as in \Cref{fig: toruslike shape}. In such an embedding, there exists a torus link or a torus knot $T(t,2)$. Indeed, consider the cycle obtained by starting at edge $e_1$, running through edge $e_3$, $e_5$ and so on, as depicted in blue in \Cref{fig: toruslike shape} for $6$ edges. If the number $t$ of edges is odd, this process ends with the torus knot $T(t,2)$. If $t$ is even, we obtain the torus link $T(t,2)$ by starting another cycle at edge $e_2$ and running through even-indexed edges. In both cases, the spatial graph contains a link or a knot and hence it cannot be paneled.

         \begin{figure}[h!]
        \centering
        \includegraphics[width=0.4\textwidth]{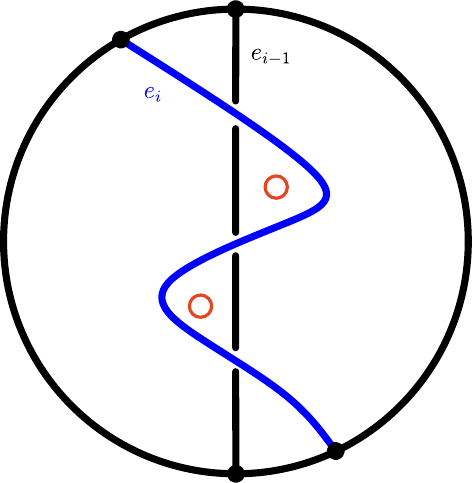}
        \caption{The crossings between $e_i$ and $e_{i-1}$ can be reduced to a single crossing, because $e_{i-2}$ does not cross $e_i$ and does not cross both over and under $e_{i-1}$.}
        \label{fig: 2edges 3crossings}
    \end{figure}
\begin{comment}
    \begin{figure}
        \centering
        \includegraphics[width=0.4\linewidth]{BraidLike.pdf}
        \caption{If the sequence $e_1, \dots, e_t$ is formed by only three fragments, any two fragments can cross each other more than once.}
        \label{fig: braid-like tangle}
    \end{figure}
\end{comment}
    \begin{figure}[h!]
        \centering
        \includegraphics[width=0.4\textwidth]{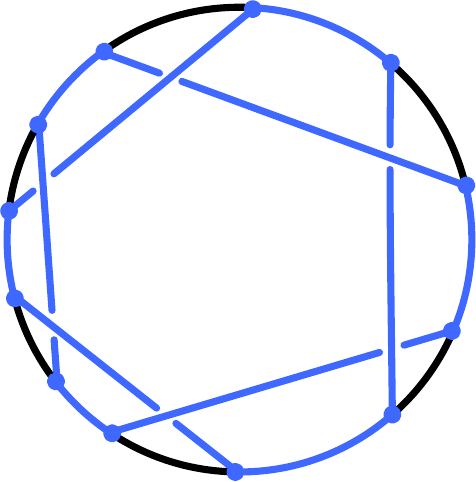}
        \caption{The only embedding (up to mirror image) if each edge crosses the following once for $t=6$. The embedding contains a torus link, highlighted in blue.}
        \label{fig: toruslike shape}
    \end{figure}
    
    We are left with the case $t=3$. For $e_1 < e_2 < e_3 < e_1$, the graph~$\tilde{\G} \subset \G$ that is determined by $\tilde{D}$ has 6 vertices and 9 edges. According to Kuratowski theorem, such a graph is not planar if and only if it is isomorphic to $K_{3,3}$~(\cite{kuratowski1930}). 
    Suppose that the underlying graph $\tilde{G}$ of $\tilde{\G}$ is not $K_{3,3}$ and therefore is planar. As $\tilde{\G}$ is not leveled due to the existence of the edge sequence, this implies that it cannot be trivial, as every trivial planar graph is leveled by considering the cycle in the boundary of any face as spine.  Since by Theorem~2.2 of \cite{RobertsonSeymourPaneled} an embedding of a planar graph is paneled if and only if it is trivial, it follows that $\G$ is not paneled. It remains to show that the underlying graph $\tilde{G}$ of $\tilde{\G}$ cannot be $K_{3,3}$. We do this by arguing that every semi-leveled and paneled embedding of $K_{3,3}$ is leveled: Theorem~(3.1) of \cite{RobertsonSeymourPaneled} states that there exist only two non-ambient-isotopic paneled embeddings of $K_{3,3}$, depicted in \Cref{fig: flat emb of K33}. Since both these embeddings of $K_{3,3}$ are leveled, it follows that every paneled and semi-leveled embedding of $K_{3,3}$ is leveled.
    \begin{figure}[h!]
        \centering
        \includegraphics[width=0.8\textwidth]{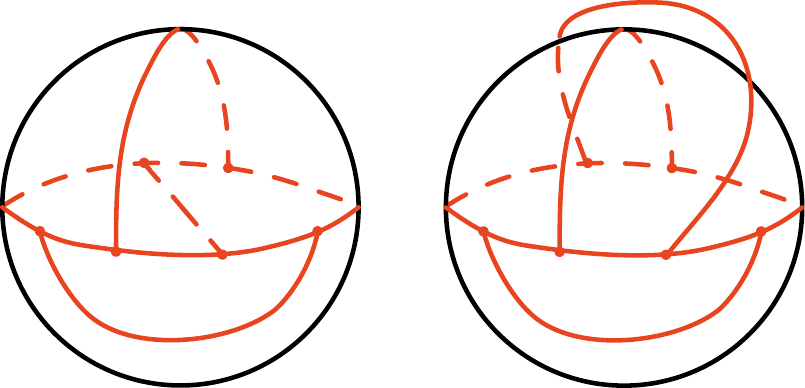}
        \caption{The two embeddings of $K_{3,3}$ that are paneled
        are also leveled.}
        \label{fig: flat emb of K33}
    \end{figure}
\end{proof}

\begin{theorem}\label{thm: paneled and 2connected implies leveled}
    If the embedding of a connected spatial graph $\G$ is paneled and there exists a cycle $C$ in $\G$ such that every subgraph $f_i \cup C$, where $f_i$ is a fragment with respect to $C$, is a trivial spatial subgraph, then the embedding of $\G$ is leveled.
\end{theorem}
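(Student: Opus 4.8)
The plan is to show that the given embedding of $\G$ is semi-leveled with spine $C$; since $\G$ is paneled, \cref{thm: semileveled and paneled implies leveled} then gives that it is leveled. First note that $C$ is unknotted: if $\G$ has no fragment with respect to $C$ then $\G=C$ bounds a disk by paneledness and is trivially leveled, and otherwise, fixing a fragment $f_i$, the circle $C$ sits inside $f_i\cup C$, which embeds in $S^2$, so $C$ embeds in $S^2$ too.

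Next, for every fragment $f_i$ I would produce an embedded disk $\Delta_i\subset S^3$ with $\partial\Delta_i=C$ and $f_i\subseteq\Delta_i$ (so automatically $\mathring{\Delta_i}\cap f_i=f_i\setminus C$, since $f_i$ meets $C=\partial\Delta_i$ only in its attaching vertices). This is exactly where the hypothesis is used: an ambient isotopy of $S^3$ taking $f_i\cup C$ onto a round $2$-sphere carries $C$ to a Jordan curve on that sphere and the connected set $f_i\setminus C$ into one of the two complementary open disks, and pulling back the closure of that disk yields $\Delta_i$. I would also fix the disk $D_C$ with $\partial D_C=C$ and $\mathring{D_C}$ disjoint from $\G$ provided by paneledness.

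The main step — and the one I expect to be the real obstacle — is to modify the $\Delta_i$, keeping $f_i\subseteq\Delta_i$ and $\partial\Delta_i=C$, so that the disks (and $D_C$) pairwise meet only along $C$; then, since $f_j\subseteq\Delta_j$, for $i\ne j$ we get $f_j\cap\mathring{\Delta_i}\subseteq\Delta_j\cap\mathring{\Delta_i}=\emptyset$, so each $\mathring{\Delta_i}$ is disjoint from $\G-f_i$ and the $\Delta_i$ form the disk system of the original definition of a leveled embedding. To do this I would work with one fragment at a time and use that a sub-spatial-graph of a paneled graph is again paneled (a cycle of the subgraph is a cycle of $\G$, and the disk it bounds, being disjoint from $\G$, is disjoint from the subgraph), so that $f_i\cup f_j\cup C$ is paneled for every pair. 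Paneledness of these pairs is precisely what forbids the ``threading'' configurations — one fragment running through a loop formed by another fragment together with an arc of $C$ — that would obstruct disjointness, and one removes the remaining intersection circles of $\Delta_i\cap\Delta_j$ by the usual innermost-disk surgeries along subdisks, using $D_C$ and paneledness of the relevant pair to guarantee that the innermost spheres bound balls meeting $\G$ trivially.

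Finally, once $\Delta_1,\dots,\Delta_n$ and $D_C$ pairwise meet only along $C$, they form a standard stack of disks with common boundary $C$, and an ambient isotopy of $S^3$ (carrying $\G$ along) puts them into standard position as nested parallel disks over a round disk bounded by $C$. Projecting in the stacking direction, each $f_i$ lies in the embedded disk $\Delta_i$ and inside the circle $C$, hence has no self-crossings and no crossings with $C$; and for $i\ne j$ one of $\Delta_i,\Delta_j$ lies entirely over the other, so $f_i$ and $f_j$ never both over- and under-cross. Thus $\G$ has a semi-leveled diagram with spine $C$, and, being paneled, it is leveled by \cref{thm: semileveled and paneled implies leveled}. (Equivalently, the disk system $\{\Delta_i\}$ directly verifies leveledness via \cref{prop: definitions of leveled are equivalent}.)
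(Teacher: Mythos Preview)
Your overall strategy --- build for each fragment $f_i$ a disk $\Delta_i$ with $\partial\Delta_i=C$ and $f_i\subset\Delta_i$, then arrange the $\Delta_i$ to meet only along $C$ --- is the paper's strategy too, and your parenthetical remark that such a disk system directly verifies the original definition is exactly the route the paper takes; the detour through \cref{thm: semileveled and paneled implies leveled} is unnecessary.

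The genuine gap is step~3. The ``usual innermost-disk surgeries'' you invoke are standard when at least one of the two disks is disjoint from the graph, but here each $\Delta_i$ carries $f_i$ in its interior. An innermost circle of $\Delta_i\cap\Delta_j$ on $\Delta_i$ may bound a subdisk meeting $f_i$, and surgering $\Delta_j$ along it (or isotoping across the resulting sphere) can destroy the condition $f_j\subset\Delta_j$ or graft part of $f_i$ onto $\Delta_j$; you give no mechanism to avoid this. Moreover, you work pair by pair using only paneledness of $f_i\cup f_j\cup C$, but the surgeries and isotopies are not performed relative to the remaining fragments, so disjointing $\Delta_i$ from $\Delta_j$ may push $\Delta_i$ through some $f_k$ or create new intersections with $\Delta_k$. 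Your assertion that the ``innermost spheres bound balls meeting $\G$ trivially'' is precisely the nontrivial point; paneledness supplies disks for cycles \emph{of} $\G$, not balls for arbitrary embedded spheres. The paper sidesteps both obstacles by never surgering disks that carry fragments: it cuts the abstract $\Delta_i$ along $f_i$ into the faces $F_{i,k}$ of a cellular embedding of $f_i\cup C$ in $S^2$, takes their boundary cycles $C_{i,k}\subset\G$, and replaces each $F_{i,k}$ by a paneling disk $D_{i,k}$ disjoint from $\G$. The key tool is then B\"ohme's lemma (\cref{lem: there are disjoints disks}), applied to the entire family $\{C_{i,k}\}_{i,k}$ at once after checking that $C_{i,k}\cap C_{j,l}$ is a connected subarc of $C$; this yields all the $D_{i,k}$ simultaneously pairwise disjoint and disjoint from $\G$. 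Reassembling, $D_{f_i}=\bigcup_k D_{i,k}\cup C_{i,k}$ already contains $f_i$ (along the seams $C_{i,k}$) and already meets each $D_{f_j}$ only in $C$, so neither difficulty arises.
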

To prove this result, we use the following lemma of B\"ohme (\cite{bohme1990}). A circuit is a closed path in a graph where edges cannot be repeated.
\begin{lemma}\label{lem: there are disjoints disks}
    Let $\varphi$ be a paneled embedding of a graph $G$ into $S^3$, and let $C_1,C_2,\dots,C_n$ be a family of circuits of $G$ such that for every $i \neq j$, the intersection of $C_i$ and $C_j$ is either connected or empty. Then there exist pairwise disjoint open disks $D_1, D_2,\dots , D_n$ disjoint from $\varphi(G)$ and such that $\varphi(C_i)$ is the boundary of $D_i$ for $i = 1,2,\dots,n$.
\end{lemma}

\begin{proof}[Proof of \cref{thm: paneled and 2connected implies leveled}]
    We will show that $\G$ is leveled with spine $C$, using the first definition of leveled spatial graph given in the text of \cref{section: preliminaries}.

For the fragment $f_i$ of $\G$ with respect to $C$, consider the subgraph $f_i\cup C$. By hypothesis, $f_i \cup C$ is a trivial spatial graph. Moreover, the spatial graph $f_i \cup C$ embeds cellular in the sphere $S^2$ since $f_i \cup C$ is connected. Let $F_{i,1}, \dots , F_{i,n}$ be the cells that are not bound by $C$, and denote their boundaries with $C_{i,k}= \partial F_{i,k}$. Note that $C$ is the boundary of two embedded disks: one disk is a cell of the cellular embedding, and the other disk is its complement $D_{F_{i}} = \bigcup_{k=1}^n F_{i,k}\cup C_{i,k}$.
Since $\G$ is paneled, there exist open disks $D_{i,1}, \dots , D_{i,n}$ disjoint from $\G$ with $\partial D_{i,k} = C_{i,k}$. Since the union $F_{i,k}\cup D_{i,k} \cup C_{i,k}$ is a sphere for every $k$, replacing the cell~$F_{i,k}$ of $D_{F_{i}}$ with the disk~$D_{i,k}$ produces a new disk $D_{f_i}= \bigcup_{k=1}^n D_{i,k}\cup C_{i,k}$ with $\partial D_{f_i} = C$. By construction, $D_{f_i} \cap \G = C \cup f_i$ and we are left to prove that $D_{f_i} \cap D_{f_j} = \emptyset$ for $i \neq j$. To apply \cref{lem: there are disjoints disks}, we need to show that $C_{i,k} \cap C_{j,l}$ is either empty or connected for all choices of $i,j,k,l,$ with $ i\neq j$. If $C_{i,k} \cap C_{j,l} \neq \emptyset$, the intersection is part of $C$ by construction, and we are left to consider cycles that run through $C$. The intersection of any such cycle with $C$ is connected, given that fragments are connected. It follows that the assumptions of \cref{lem: there are disjoints disks} are fulfilled and therefore $D_{i,k}$ and $D_{j,l}$ are disjoint for all $i,j,k,l,$ with $i\neq j$. Consequently, $D_{f_i} \cap D_{f_j} = C$ for $i \neq j$ and the embedding is shown to be leveled.
\end{proof}

The following corollary is implied by \cref{thm: paneled and 2connected implies leveled}, but we give an independent proof below.
\begin{corollary}\label{thm: paneled and Hamiltonian implies leveled}
    If the embedding of a Hamiltonian spatial graph $\G$ is paneled, then it is leveled.
\end{corollary}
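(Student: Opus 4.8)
The plan is to produce the family of disks required by the original definition of a leveled spatial graph (the one stated in the text of \cref{section: preliminaries}) directly, mirroring the argument of \cref{thm: paneled and 2connected implies leveled} but exploiting that, for a hamiltonian spine, the fragments are extremely simple.

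First I would take a hamiltonian cycle $C$ of the underlying graph of $\G$ as the candidate spine. Since $\G$ is paneled, $C$ bounds an embedded disk disjoint from $\G$, so $C$ is unknotted. Because the underlying graph is simple and $C$ is hamiltonian, every fragment of $\G$ with respect to $C$ is either an isolated vertex (which is harmless) or a single chord $f_i$, i.e. an edge joining two non-consecutive vertices $u_i,v_i$ of $C$. The two arcs $\alpha_i,\beta_i$ of $C$ between $u_i$ and $v_i$, together with $f_i$, determine two cycles $C_i^{1}=\alpha_i\cup f_i$ and $C_i^{2}=\beta_i\cup f_i$ of $\G$, with $C_i^{1}\cap C_i^{2}=f_i$ and $C_i^{1}\cup C_i^{2}=C\cup f_i$.

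Next I would invoke B\"ohme's \cref{lem: there are disjoints disks} for a suitable subfamily of the circuits $\{C_i^{1},C_i^{2}\}$ to obtain pairwise disjoint open disks $D_i^{a}$, disjoint from $\G$, with $\partial D_i^{a}=C_i^{a}$. Gluing $D_i^{1}$ and $D_i^{2}$ along their common boundary arc $f_i$ then yields, for each chord $f_i$, a disk $D_{f_i}$ with $\partial D_{f_i}=\alpha_i\cup\beta_i=C$ and $f_i\subset D_{f_i}$. Since the B\"ohme disks are disjoint from $\G$ and have pairwise disjoint interiors, one checks routinely that $\mathring D_{f_i}\cap\G=\mathring f_i$ and $\mathring D_{f_i}\cap\mathring D_{f_j}=\emptyset$ for $i\neq j$, hence $D_{f_i}\cap D_{f_j}=C$. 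This is exactly the data the original definition of a leveled spatial graph asks for, so the embedding of $\G$ is leveled with spine $C$.

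The step I expect to be the main obstacle is justifying that \cref{lem: there are disjoints disks} applies, i.e. that the chosen family of circuits has pairwise intersections that are connected or empty. As distinct chords have interiors disjoint from $C$ and from one another, every such intersection is a union of sub-arcs of $C$; the delicate point is that two sub-arcs of a cycle can meet in two components, so the arcs $\alpha_i,\beta_i$ and the choice of which circuits are fed into \cref{lem: there are disjoints disks} must be made carefully -- for instance by cutting $C$ at a basepoint to linearise the arcs, or by using $C$ itself together with one circuit per chord and reassembling. If this bookkeeping turns out to be awkward, there is a clean fallback: each $f_i\cup C$ is a planar ($\theta$-)graph and a subgraph of the paneled $\G$, hence itself paneled, hence trivial by Theorem~2.2 of \cite{RobertsonSeymourPaneled}; thus the hypothesis of \cref{thm: paneled and 2connected implies leveled} is met and that theorem gives the conclusion. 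The independent argument above is still worth recording because it exhibits the disks straight from $\G$ being paneled, without routing through the triviality of the subgraphs $f_i\cup C$.
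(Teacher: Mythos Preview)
Your fallback---observe that each $f_i\cup C$ is a planar $\theta$-graph, hence paneled as a subgraph of $\G$, hence trivial by Theorem~2.2 of \cite{RobertsonSeymourPaneled}, and apply \cref{thm: paneled and 2connected implies leveled}---is exactly the one-line deduction the paper records before giving its independent proof, so it is correct but not the intended content.

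Your main argument takes a genuinely different route from the paper's, and the obstacle you anticipate is real but not dispatched by the fixes you sketch. For non-conflicting chords with endpoints in cyclic order $u_i,v_i,u_j,v_j$ one gets $C_i^{2}\cap C_j^{2}=\beta_i\cap\beta_j=[v_i,u_j]\cup[v_j,u_i]$, which is disconnected. A basepoint $p$ makes the family $\{C_i^1\}$ (arcs avoiding $p$) admissible for \cref{lem: there are disjoints disks}, but $C_i^1\cap C_j^2=\alpha_i\cap\beta_j$ is still disconnected whenever $\alpha_j\subsetneq\alpha_i$ (nested chords). Feeding only $\{C,C_1^1,\dots,C_n^1\}$ through \cref{lem: there are disjoints disks} yields pairwise disjoint $D_C,D_1^1,\dots,D_n^1$, but each $D_{f_i}$ must be a full disk with $\partial D_{f_i}=C$ containing $f_i$, so you also need a companion half $D_i^2$ disjoint from every $D_j^1$ and $D_j^2$. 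Gluing $D_i^1$ to $D_C$ along $\alpha_i$ does produce a disk bounded by $C_i^2$, but all of these share $D_C$ and hence are not pairwise disjoint; the ``reassembling'' is therefore a genuine missing step, not mere bookkeeping.

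The paper's independent proof avoids B\"ohme altogether. It reduces, via \cref{thm: semileveled and paneled implies leveled}, to showing that $\G$ is semi-leveled with spine $C$: since $\G$ is paneled, $C$ is unknotted and one can take a diagram with $C$ crossing-free and every chord drawn inside; a chord $g$ then cannot cross a chord $f$ both over and under, because the cycle $K$ formed by $f$ and an arc of $C$ bounds a panel disk disjoint from $g$, across which an isotopy makes all $f$--$g$ crossings the same type. This argument is lighter, sidesteps the circuit-intersection bookkeeping entirely, and yields the extra fact that every hamiltonian cycle of $\G$ can serve as spine.
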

\begin{proof}
   Due to \cref{thm: semileveled and paneled implies leveled}, it is enough to prove that paneled and Hamiltonian imply semi-leveled. That is, it is sufficient to prove that there exists a cycle $C$ such that no fragment with respect to $C$ crosses itself or $C$, and such that two fragments do not cross above and below each other.

    Consider a Hamiltonian cycle $C$ of $\G$. We show that $C$ can be chosen as spine for $\G$: Because $\G$ is paneled, $C$ is unknotted. Therefore, choose a diagram $D$ where $C$ is unknotted and every fragment with respect to $C$ projects into the bounded connected component of $\mathbb{R}^2 \setminus C$. This is possible, because no fragment intersects non-trivially with $C$ given that $\G$ is paneled. Let $f$ and $g$ be two fragments. We show that they do not cross over and under each other. Indeed, consider, without loss of generality, one of the two cycles formed by $f$ and part of the spine $C$. Call it $K$. As $\G$ is paneled, there exists a disk $D'$ such that $\partial D'= K$, and $\G$ (so in particular $g$) does not intersect the interior of $D'$. This means that, if $g$ crosses $f$ both over and under, $g$ does not cross the disk $D'$ and so we can perform an ambient isotopy to ensure that the crossings between $g$ and $f$ are all of the same type.

    Similarly, $f$ cannot cross itself in a non-trivial way, because $\G$ is paneled and hence knotless. Therefore, $C$ is the spine of a semi-leveled embedding of $\G$.
\end{proof}
Note that the proof of \cref{thm: paneled and Hamiltonian implies leveled} shows that every Hamiltonian cycle of a semi-leveled and paneled embedding can be chosen as spine.

\section{A new invariant to measure non-planarity}
A leveled embedding of a spatial graph $\G$ with respect to spine $C$ is endowed with an integer number: its number of levels. We can define a spatial graph invariant using the number of levels of a leveled embedding for $\G$.

As a spatial graph is defined up to ambient isotopy, also the \emph{level number} of a spatial graph should be defined up to ambient isotopy. Since a leveled embedding might have more than one cycle that can act as spine, and considering another spine of a spatial graph can result in a different number of levels (see for example~\Cref{fig: mobius ladder}, where two different choices of the spine for a Hamiltonian leveled spatial graph have different number of levels), we need to range over all possible spines for a leveled embedding of the spatial graph.

\begin{figure}[h]
    \centering
    \includegraphics[width=0.8\textwidth]{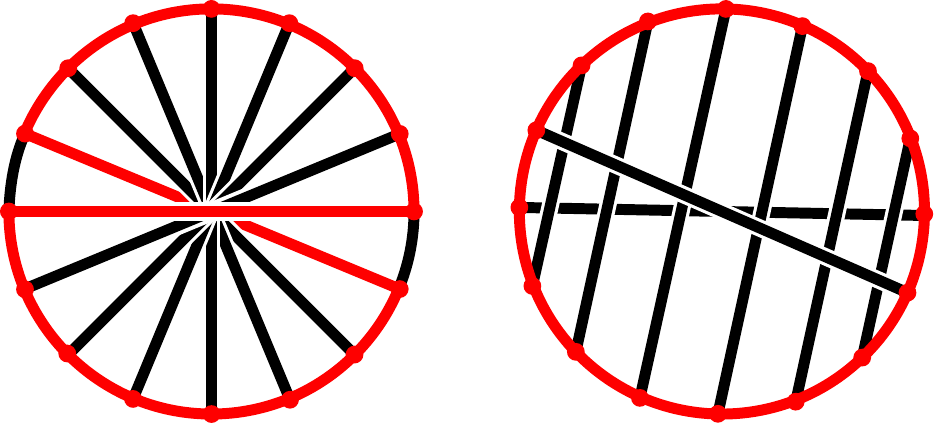}
    \caption{Left: a Hamiltonian leveled embedding of the M\"obius ladder on 16 vertices $M_{16}$ with eight levels and a selected cycle in red. Right: a Hamiltonian leveled embedding of $M_{16}$ with three levels, where the red cycle of the left figure is chosen as the spine.}
    \label{fig: mobius ladder}
\end{figure}

\begin{definition}\label{def: level number spatial graph}
    Given a spatial graph $\G$, we define its \emph{level number}, denoted $\ell(\G)$, to be the minimum number of levels of $\G$ ranging over all possible spines of $\G$. If $\G$ does not admit a leveled embedding, we set $\ell(\G)=\infty.$
\end{definition}
%As it is often the case in topological graph theory (\cite{BookThickness}, \cite{mellor2016}), a spatial graph invariant defines a related abstract graph invariant, 
Ranging over all possible embeddings of an abstract graph $G$ allows us to define a corresponding property of abstract graphs:
\begin{definition}
    Given an abstract graph $G$, we define the \emph{level number} of $G$, denoted $\ell(G)$, to be the minimum level number of $G$ ranging over all embeddings of $G$.
\end{definition}
It is immediate to see that the level number of a graph $G$ is a lower bound for the level number of each of its embeddings.
\begin{remark}\label{rmk: level number 1 iff spherical}
    A leveled spatial graph $\G$ has level number~1 if and only if it is trivial. A graph $G$ with at least one cycle has level number~1 if and only if it is a planar graph.

    We show the statement about spatial graphs, as the one about graphs follows from it by the definition of the level number. A leveled embedding with one level embeds in the sphere. For the other direction of this statement, any embedding of a spatial graph in the sphere is a one-level leveled embedding, considering a cycle contained in the boundary of any face as a spine for the spatial graph. Thus any trivial spatial graph has level number one.
\end{remark}
Since Hamiltonian leveled spatial graphs play an important role in studying leveled spatial graphs, it is natural to formulate~\cref{def: level number spatial graph} for a Hamiltonian leveled spatial graph.
\begin{definition}
    Given a Hamiltonian spatial graph $\G$, we define its \emph{Hamiltonian level number}, denoted $h\ell(\G)$, to be the minimum number of levels of $\G$ ranging over all choices of Hamiltonian spines for a Hamiltonian leveled embedding of $\G$. The Hamiltonian level number $\hl(G)$ of an abstract Hamiltonian graph $G$ is the minimum Hamiltonian level number of all its embeddings.
\end{definition}
Note that, due to~\cref{cor: Hamiltonian graphs have a leveled embedding}, any Hamiltonian graph $G$ has finite Hamiltonian level number.

Equivalently, the Hamiltonian level number of a Hamiltonian graph $G$ is equal to the smallest chromatic number of the conflict graph of $G$ with respect to any Hamiltonian cycle of $G$.
%If the Hamiltonian graph $G$ has all vertices of degree 3, then its Hamiltonian level number is exactly the chromatic number of its circle graph.

The level number of a Hamiltonian graph is a lower bound for its Hamiltonian level number by definition. However, the following example shows that these numbers may differ from each other.
\begin{example}
    $\ell(K_4)=1$ but $\hl(K_4)=2$, as is illustrated in \Cref{fig: K4 in one and two levels}.%, while $h\ell(K_5)>2,$ because $K_5$ is abstractly non-planar. Indeed, if the Hamiltonian level number of $K_5$ was 2, then it would be possible to embed the Hamiltonian leveled embedding with two levels of $K_5$ on a sphere, setting the equator to be the Hamiltonian spine and the two levels to be embedded in the upper and lower hemisphere respectively. It is not hard to prove that $h\ell(K_5)=3$.
\end{example}
\begin{figure}
    \centering
    \includegraphics{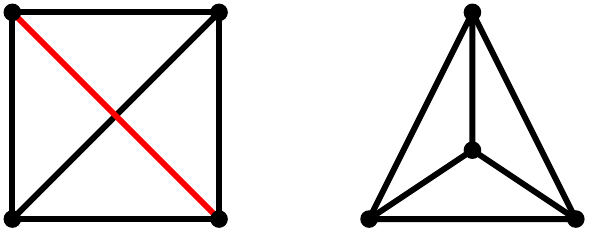}
    \caption{The Hamiltonian level number of $K_4$ differs from its level number: $\ell(K_4)=1$ and $\hl(K_4)=2$.}
    \label{fig: K4 in one and two levels}
\end{figure}
We adapt \cref{rmk: level number 1 iff spherical} for the Hamiltonian level number.
\begin{proposition}
    A Hamiltonian graph $G$ is planar if and only if its Hamiltonian level number is at most 2. Moreover, $G$ is outerplanar if and only if its Hamiltonian level number is 1.
\end{proposition}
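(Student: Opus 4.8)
The plan is to use the reformulation recorded just above the statement: for a hamiltonian graph $G$, the number $\hl(G)$ equals the minimum, over all hamiltonian cycles $C$ of $G$, of the chromatic number of the conflict graph of $G$ with respect to $C$. Since $G$ is hamiltonian, every fragment with respect to a hamiltonian cycle $C$ is a single chord of $C$, and two distinct chords conflict exactly when their endpoints alternate along $C$ (chords sharing an endpoint are not counted as conflicting); thus the conflict graph with respect to $C$ is precisely the interlacement graph of the chord diagram of $(G,C)$.

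The heart of the argument is the classical fact that, for a hamiltonian graph $G$ with hamiltonian cycle $C$, the abstract graph $G$ is planar if and only if the conflict graph with respect to $C$ is bipartite; this then holds simultaneously for every hamiltonian cycle, since planarity of $G$ is not tied to a particular $C$. I would prove the ``if'' direction by drawing $C$ as a round circle and placing one color class of the chords inside the disk bounded by $C$ and the other color class outside: no two chords of one color interleave along $C$, so no two of them need to cross, which yields a planar embedding. For the ``only if'' direction I would fix a planar embedding of $G$; then $C$ is a Jordan curve separating $S^2$ into two closed disks, each chord lies in exactly one of the two disks, and two chords lying in the same disk cannot interleave along $C$ without their interiors meeting. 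Hence ``the side of $C$ on which a chord lies'' is a proper $2$-coloring of the conflict graph. Combining the two directions: if $G$ is planar then every hamiltonian cycle has a conflict graph of chromatic number at most $2$, so $\hl(G)\le 2$; conversely, if $\hl(G)\le 2$ then some hamiltonian cycle has a conflict graph of chromatic number at most $2$, hence bipartite, hence $G$ is planar. This settles the first assertion.

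For the second assertion I would show that $\hl(G)=1$ if and only if some hamiltonian cycle $C$ of $G$ has an edgeless conflict graph, i.e.\ no two chords of $C$ interleave along $C$. If such a $C$ exists, drawing $C$ as a circle and every chord as a straight segment inside it gives a plane embedding in which all vertices lie on the boundary of the outer face, so $G$ is outerplanar. Conversely, suppose $G$ is outerplanar; being hamiltonian, $G$ has at least three vertices and is therefore $2$-connected, so the boundary of the outer face of an outerplanar embedding is a cycle through all vertices, that is, a hamiltonian cycle $C_0$. Every edge of $G$ not on $C_0$ is then a chord drawn strictly inside the disk bounded by $C_0$, so no two such chords interleave along $C_0$; hence the conflict graph with respect to $C_0$ is edgeless and $\hl(G)\le 1$. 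Since $\hl(G)\ge\ell(G)\ge 1$ (a leveled embedding, in particular the trivial embedding of a cycle, has at least one level, which is consistent with \cref{rmk: level number 1 iff spherical} and also disposes of the case $G=C_n$), we conclude $\hl(G)=1$.

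The only delicate point I expect is the ``only if'' half of the planarity lemma and the bookkeeping around it: verifying carefully that two interleaving chords on the same side of $C$ are forced to cross (a Jordan-curve argument inside a disk), confirming that chords with a common endpoint are genuinely non-conflicting so that the conflict graph is exactly the interlacement graph, and treating the degenerate case $G=C_n$ with no fragments. None of this is deep — the planarity lemma is essentially the well-known reduction of planarity testing for a hamiltonian graph to $2$-colorability of the interlacement graph of the chords of a hamiltonian cycle — but these are the places where a careless write-up could fail.
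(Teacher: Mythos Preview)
Your argument is correct and is essentially the paper's proof unpacked in combinatorial language: where the paper simply says ``a planar embedding on $S^2$ with a hamiltonian cycle as spine has at most two levels, and conversely a $\le 2$-level hamiltonian embedding sits on a sphere,'' you phrase the same Jordan-curve bipartition of chords as ``the conflict graph with respect to $C$ is bipartite if and only if $G$ is planar,'' and likewise for the outerplanar half.

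One caution: your lower bound $\hl(G)\ge\ell(G)\ge 1$ in the outerplanar part is not consistent with the paper's conventions, which record $\hl(K_3)=\ell(K_3)=0$ in \cref{cor: formula for level number of Kn} (a bare cycle has no fragments, hence an empty level partition), and \cref{rmk: level number 1 iff spherical} does not actually furnish that inequality. The paper's own proof glosses over this degenerate case as well, so this is a wrinkle in the statement rather than a flaw in your method; just drop the appeal to a universal lower bound and instead note that for $G\neq C_n$ the conflict graph is nonempty, so its chromatic number is at least $1$.
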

\begin{proof}
    If $G$ is planar, then it has an embedding on a sphere $S^2$. This embedding is Hamiltonian leveled with at most 2 levels by taking a Hamiltonian cycle of $G$ as the spine. Conversely, any Hamiltonian leveled embedding of $G$ with at most two levels is an embedding of $G$ on a sphere, and therefore trivial, implying that $G$ is planar.

    $G$ has a Hamiltonian leveled embedding with one level if and only if it can be embedded in a closed disk with all vertices lying on the boundary of the disk. This is an outerplanar embedding of $G$.
\end{proof}

Note that both the level number and the Hamiltonian level number are not increasing, in the sense specified by the following example.
\begin{example}\label{exm: level number is not increasing}
    Consider the Hamiltonian graph $K_{4,4}$. Both its level number $\ell(K_{4,4})$ and its Hamiltonian level number $\hl(K_{4,4})$ equal 4, as we show in \cref{thm: level number of Knn}. Consider instead the Hamiltonian graph $G$ obtained from $K_{4,4}$ by adding the four dashed edges in \Cref{fig: level number is not increasing}. One might expect $\hl(G) \ge \hl(K_{4,4})$ as $K_{4,4}$ is a subgraph of $G$ obtained by removing some edges. However, $\hl(G) = 3$ as shown in \Cref{fig: level number is not increasing}, and since $\ell(G)\le  \hl(G)$, it also holds that $\ell(G) \le \ell(K_{4,4})$.
\end{example}
\begin{figure}
    \centering
    \includegraphics[width=0.5\linewidth]{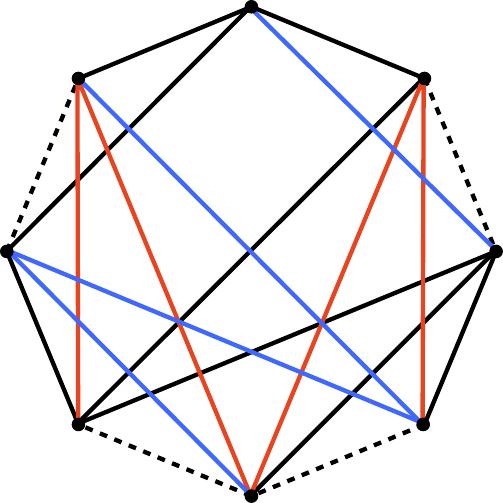}
    \caption{A Hamiltonian graph with Hamiltonian level number 3. The graph contains $K_{4,4}$ as subgraph removing the four dashed edges.}
    \label{fig: level number is not increasing}
\end{figure}
\begin{comment}
Note that the Hamiltonian level number is not increasing, in the sense specified by the following example.
\begin{example}\label{exm: hamilton level number is not increasing}
    Let $H$ be the Hamiltonian graph pictured in \Cref{fig: Hamiltonian level number is not increasing in supergraphs}. The Hamiltonian level number of $H$ is $4$ as it has a unique Hamiltonian cycle. Consider $G=M_{16}$, the M\"obius ladder on 16 vertices, which is the Hamiltonian graph obtained from $H$ by adding four edges, pictured left in \Cref{fig: mobius ladder}. One might expect $\hl(G) \ge \hl(H)$ as $H$ is a subgraph of $G$ obtained by removing edges. However, $\hl(G)=3$ since a Hamiltonian leveled embedding with three levels can be given as shown right in \Cref{fig: mobius ladder}, and the non-planarity of $G$ implies that $\hl(G)>2$.
\end{example}
\begin{figure}
    \centering
    \includegraphics[width=0.5\textwidth]{HamiltonLevelNumberDecreases.pdf}
    \caption{A Hamiltonian leveled spatial graph with Hamiltonian level number 4.}
    \label{fig: Hamiltonian level number is not increasing in supergraphs}
\end{figure}
\end{comment}

For complete graphs, we show the following relation between level numbers and Hamiltonian level numbers.
\begin{proposition}\label{prop: level number of Kn is at most as big as Hamiltonian level of Kn-1 plus 1}
    For $n \ge 4$, $\ell(K_n) \le \hl(K_{n-1}) +1.$
\end{proposition}
\begin{proof}
    Consider a Hamiltonian leveled embedding of $K_{n-1}$ which realizes the Hamiltonian level number $\hl(K_{n-1})$ and add one vertex to its spine. The edges between the added vertex and all other vertices can be all assigned to a single new level. Therefore, the resulting spatial graph is a leveled embedding of $K_n$ with exactly one more level than the embedding of $K_{n-1}$ we started with. %Given that all existing edges of the embedding of $K_{n-1}$ have a crossing with one of the added edges, the constructed leveled embedding of $K_n$ has exactly one more level than the embedding of $K_{n-1}$ we started with.
    %formed by a vertex adjacent to each vertex of $K_{n-1}$. This is a leveled embedding of $K_n$.
\end{proof}
Note that the inequality in \cref{prop: level number of Kn is at most as big as Hamiltonian level of Kn-1 plus 1} for $n\ge 5$ is sharp if exactly one vertex does not belong to the spine of the leveled embedding of $K_n$. Indeed, the minimal number of levels to embed $K_{n-1}$ in such a way that all vertices lie on the spine is $\hl(K_{n-1})$ by definition. The vertex not belonging to the spine needs to be assigned to a different level, because $n \ge 5$ and so the vertex needs to be connected to at least four vertices on the spine.

\subsection{Comparison to existing similar invariants}
%In \cite{LeveledCellularEmbeddings} was mentioned that level embeddings are related to book embeddings. We would like now to make the relationship between leveled embeddings and other existing invariants clear.
%There are two concepts of thickness defined in literature for a graph that we want to relate to the level number: graph-theoretical thickness (\cite{TutteThicknessofGraphs}) and book thickness (\cite{BookThickness}).
%These invariants have in common that, taken a graph, they split it in a minimal collection of certain planar subgraphs.
Similar to the level number, notions of thickness are measures of the non-planarity of a graph.
The \emph{graph-theoretical thickness} of a graph~$G$, denoted $\theta(G)$, is the minimum number of planar subgraphs whose union is $G$. The \emph{book thickness} of $G$, denoted $\bt(G)$, is the smallest number $n$ such that $G$ has an $n$-book embedding.

%In the definition of graph theoretical thickness it is equivalent to ask that all planar subgraphs whose union is $G$ have fixed position in 3-space, similarly to what happens to the planar subgraphs in the book thickness or the level number of $G$. This is due to Kainen~\cite{RealAndLinearThickness}.

\begin{proposition}\label{prop: inequalities concerning thickness level number and book thickness}
    For any abstract graph $G$ and any Hamiltonian graph $H$ \begin{enumerate}
        \item $\theta(G) \le \ell(G),$
        \item $\bt(H) \le \hl(H).$
    \end{enumerate}
\end{proposition}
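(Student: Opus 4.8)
The plan is to derive both inequalities from a single structural observation about level partitions: \emph{two fragments lying in the same level never cross each other in the diagram}. First I would record this. Suppose $\G$ is a leveled embedding of $G$ with a reduced semi-leveled diagram $D$ divided in levels, with level partition $\mathscr L=\{L_1,\dots,L_n\}$, and suppose two fragments $f,g\in L_i$ had a crossing in $D$. By \cref{def: semi-leveled} they cannot cross both over and under one another, so one of them, say $f$, crosses only over $g$; but then \cref{def: divided in levels} forces $g\in L_k$ for some $k<i$, contradicting $g\in L_i$. Hence no such crossing exists.

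For part (1), I would take a leveled embedding of $G$ realizing $\ell(G)=n$ and, for each $i$, set $G_i:=C\cup\bigcup_{f\in L_i}f$. Deleting from $D$ all fragments not in $L_i$ leaves a diagram with no fragment self-crossings and no fragment--spine crossings (by \cref{def: semi-leveled}) and, by the observation above, no crossings between distinct fragments of $L_i$; thus this sub-diagram is crossing-free and exhibits $G_i$ as a planar graph. Since every edge of $G$ lies in $C$ or in a unique fragment, $G=G_1\cup\dots\cup G_n$ is a union of $n$ planar subgraphs, whence $\theta(G)\le n=\ell(G)$. The case $\ell(G)=\infty$ (in particular $G$ acyclic) is vacuous, and taking the minimum over embeddings gives the graph inequality.

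For part (2), I would fix a hamiltonian leveled embedding of $H$ realizing $\hl(H)=n$ with hamiltonian spine $C$, so that every fragment is a single chord of $C$. After putting $D$ in the standard form in which $C$ is an embedded circle bounding all the chords (the normal form already used in \cref{def: divided in levels} and in the proof of \cref{prop: definitions of leveled are equivalent}), the observation above says the chords in each $L_i$ are pairwise non-crossing, hence, by the Jordan curve theorem, have pairwise non-alternating endpoints along $C$. I would then build an $n$-book embedding of $H$: cut $C$ open to form the spine, placing the vertices in the cyclic order given by $C$; assign the chords of $L_i$ to page $i$ (a pairwise non-alternating family of chords embeds disjointly in a single page, since ``alternating on the cycle'' coincides with ``interleaved on the line''); and route the edges of $C$ through page $1$, where, being consecutive on the spine, they conflict with nothing. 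This gives $\bt(H)\le n=\hl(H)$. Alternatively, I could invoke the stated identification of $\hl(H)$ with the least chromatic number, over hamiltonian cycles $C$, of the conflict graph, together with the classical fact that for a fixed printing cycle the page number of $H$ equals that chromatic number (two chords conflict exactly when their endpoints alternate), and then minimize over $C$.

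I do not expect a serious obstacle: both inequalities are essentially repackagings of the level-partition structure. The one point that deserves care is the reduction in part (2) to a diagram with $C$ an embedded circle bounding all the chords, but that is the normal form already in use, so it costs nothing. The genuinely load-bearing step is the same-level crossing-freeness observation in the first paragraph, from which everything else is routine.
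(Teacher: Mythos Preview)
Your proof is correct and takes the same approach as the paper, which records only that each level is a planar subgraph (part 1) and that a hamiltonian leveled embedding with $n$ levels is an $n$-book embedding (part 2). You supply the details the paper leaves implicit, notably the same-level crossing-freeness observation and the explicit placement of the edges of $C$ on a page of the book.
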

\begin{proof}
    \hfill
    \begin{enumerate}
        \item The levels of a leveled embedding are planar subgraphs whose union is the whole graph.
        \item A Hamiltonian leveled embedding of $H$ with $n$ levels is an $n$-book embedding of $H$.
    \end{enumerate}
\end{proof}
In general, neither of these inequalities is an equality, even though they are both sharp for $G=H=K_4$. For example, $\theta(K_6)=2$ (see Beineke and Harary~\cite{HararyThicknessOfCompleteGraph}) but $\ell(K_6)=3$, as we show in \cref{prop: level number of Kn}. Also, $K_{4,4}$ has book thickness equal to 3, as shown by Bernhart and Kainen~\cite{BookThickness}, while $\hl(K_{4,4})=4$ as we show in \cref{thm: level number of Knn}. %the graph $H$ of \cref{exm: hamilton level number is not increasing} has book thickness 3 and Hamiltonian level number 4. 
In the second inequality of \cref{prop: inequalities concerning thickness level number and book thickness}, it is not possible to exchange the Hamiltonian level number with the level number. In other words, it is not true that $\bt(G) \le \ell(G)$ for a graph $G$. This can be seen as follows: the level number of any planar graph is 1 by \cref{rmk: level number 1 iff spherical}, whereas  Bernhart and Kainen showed that the book thickness of non-Hamiltonian maximal planar graphs is at least 3~(\cite{BookThickness}). 

%As it is custom in literature (\cite{HararyThicknessOfCompleteGraph}, \cite{ThicknessOfArbitraryCompleteGraph}, \cite{K16hasthickness3}, \cite{BookThickness}), we focus on the level number of complete graphs.
For complete graphs with at least five vertices, the level number, Hamiltonian level number, and book-thickness all coincide:
\begin{proposition}\label{prop: level number of Kn}
    For any $n\ge 5$, $\hl(K_n)= \ell(K_n)=\bt(K_n) = \lceil \frac{n}{2}\rceil$.
\end{proposition}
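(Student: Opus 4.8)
The plan is to sandwich all three quantities between $\lceil n/2\rceil$ from above and below, using the inequalities already available, $\ell(K_n)\le \hl(K_n)$ and $\bt(K_n)\le \hl(K_n)$, together with two new inputs: the identification $\hl(K_n)=\bt(K_n)$, which pins down the common value via Bernhart--Kainen, and a lower bound $\ell(K_n)\ge \lceil n/2\rceil$ valid for every choice of spine, not only hamiltonian ones.

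\emph{Step 1: $\hl(K_n)=\bt(K_n)=\lceil n/2\rceil$.} I would invoke the equivalent description of the hamiltonian level number as the smallest chromatic number, over hamiltonian cycles $C$ of $K_n$, of the conflict graph of $K_n$ with respect to $C$. For $K_n$ every cyclic order of the vertices is a hamiltonian cycle, the fragments with respect to such a cycle $C$ are exactly its chords, and two single-edge chords conflict precisely when their endpoint pairs interleave; hence the conflict graph with respect to $C$ is the circle graph of the chords of $C$, whose chromatic number equals the least number of pages in which $K_n$ can be drawn with printing cycle $C$ (the cycle edges are crossing-free and cost nothing). Minimising over $C$ gives $\hl(K_n)=\bt(K_n)$, and $\bt(K_n)=\lceil n/2\rceil$ by Bernhart--Kainen \cite{BookThickness}. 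Since $\ell(K_n)\le \hl(K_n)$, it remains only to show $\ell(K_n)\ge \lceil n/2\rceil$.

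\emph{Step 2: $\ell(K_n)\ge \lceil n/2\rceil$ by counting edges level by level.} Fix a leveled embedding of $K_n$ with spine $C$ and level partition $L_1,\dots,L_k$, and put $m=|V(C)|$. I would first show $m\in\{n-1,n\}$: if two vertices lay off the spine they would belong to a single fragment $F$, each adjacent to the other and to all $m\ge 3$ vertices of $C$, and $F$ must embed in a disk whose boundary is identified with $C$; but the edges from one off-spine vertex to the $m$ boundary vertices cut that disk into $m$ triangular regions, trapping the second off-spine vertex so that it cannot be joined to all of $V(C)$ --- a contradiction. Next, each $L_i$ drawn together with $C$ inside the disk bounded by $C$ is an honest planar embedding in which $C$ bounds a face, because by definition of the level partition a level-$i$ fragment overcrosses only lower-level fragments and the semi-leveled condition forbids two fragments from both over- and under-crossing, so fragments in a common level do not cross. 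If $m=n$, each $L_i\cup C$ is outerplanar on $n$ vertices, hence has at most $2n-3$ edges, i.e.\ at most $n-3$ chords of $C$; as the $\tfrac{n(n-3)}{2}$ chords are partitioned among the $L_i$, this forces $k\ge \tfrac n2$. If $m=n-1$, the unique off-spine vertex forms a fragment $F$ equal to the star to all $n-1$ spine vertices; the level containing $F$ consists of $F$ alone, since $F\cup C$ already triangulates the disk, while every other $L_i\cup C$ is outerplanar on $n-1$ vertices and so carries at most $n-4$ chords, and counting the remaining $\tfrac{(n-1)(n-4)}{2}$ chords yields $k-1\ge \tfrac{n-1}{2}$; either way $k\ge \lceil n/2\rceil$. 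Minimising over spines gives $\ell(K_n)\ge \lceil n/2\rceil$, and combined with Step 1 we conclude $\hl(K_n)=\ell(K_n)=\bt(K_n)=\lceil n/2\rceil$.

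\emph{Main obstacle.} The genuinely delicate points are, first, the disk-planarity obstruction forcing the spine to miss at most one vertex --- this is what makes the case split clean and finite --- and second, the verification that each level is outerplanar, respectively that $F$'s level triangulates the disk, so that the maximal-outerplanar edge bound applies; both rest on the topological fact that fragments in the same level cannot be drawn on the same side of the spine without crossing. Everything else is edge arithmetic together with the already established $\ell\le\hl$ and the cited value $\bt(K_n)=\lceil n/2\rceil$. As a variant, in Step 2 one could instead bound $k$ from below by the chromatic number of the conflict graph with respect to $C$ (conflicting fragments must cross, so distinct levels properly colour it) and then invoke $\bt(K_{n-1})$ and $\bt(K_n)$; this gives the same estimate but leans more heavily on Bernhart--Kainen.
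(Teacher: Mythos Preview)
Your proof is correct and the overall architecture matches the paper's: both first identify $\hl(K_n)$ with $\bt(K_n)$ via the printing cycle/conflict-graph correspondence and then rule out non-hamiltonian spines by the disk-triangulation obstruction showing at most one vertex can lie off the spine. The genuine difference is how the case $m=n-1$ is finished. The paper appeals to \cref{prop: level number of Kn is at most as big as hamiltonian level of Kn-1 plus 1} and the remark following it: the off-spine vertex forces an extra level on top of a hamiltonian leveled $K_{n-1}$, so the number of levels is at least $\hl(K_{n-1})+1=\lceil(n-1)/2\rceil+1\ge\lceil n/2\rceil$, feeding the already-computed $\hl(K_{n-1})$ back into the argument. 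You instead bound each level directly by the maximal-outerplanar edge count ($\le n-3$ chords when $m=n$, $\le n-4$ chords when $m=n-1$) and divide. Your route is more self-contained --- it does not need Proposition~\ref{prop: level number of Kn is at most as big as hamiltonian level of Kn-1 plus 1} or its sharpness discussion, and it never reuses $\hl(K_{n-1})$ --- while the paper's route makes the inductive structure between consecutive $K_n$'s explicit. Both arguments rest on the same two topological facts you single out as the main obstacle: that two off-spine vertices cannot coexist in a level, and that fragments in a common level are pairwise non-crossing so $L_i\cup C$ is outerplanar (respectively, that the star fragment monopolises its level).
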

\begin{proof}
    The last equality is due to Bernhart and Kainen~\cite{BookThickness} and it holds for $n\ge 4$. 
    Every $\lceil \frac{n}{2}\rceil$-book embedding of $K_n$ has a Hamiltonian printing cycle. Because the printing cycle can be chosen as spine and the levels are inherited from the pages, it is a Hamiltonian leveled embedding with number of levels equal to the number of pages of the book embedding. This, together with 2 of \cref{prop: inequalities concerning thickness level number and book thickness}, ensures that $\hl(K_n)=\bt(K_n).$

    Lastly, we show that $\hl(K_n)=\ell(K_n).$ Since $\ell(G) \leq \hl(G)$, it is sufficient to show that considering a non-Hamiltonian spine for $K_n$ does not yield a level number smaller than $\lceil \frac{n}{2}\rceil$.
    If the spine was a non-Hamiltonian cycle and there were two or more vertices of $K_n$ not contained in this cycle, the vertices that are not part of the cycle must belong to the same level given that they are adjacent. In that case, both vertices $v$ and $w$ and all edges issuing from them need to be embedded in a disk $D$ whose boundary coincides with the spine. In $D$ we can first place $v$ and connect it with all vertices on the spine, decomposing $D$ into triangles. Then, we would need to place $w$ into a triangle, meaning that we can connect it with at most three vertices. If $n\ge5$, $w$ has degree at least 4 and therefore $w$ and $v$ cannot belong to the same level. Therefore, we are left to consider spines with at least $n-1$ vertices. If exactly one vertex does not belong to the spine, we obtain the leveled embedding described in the proof of \cref{prop: level number of Kn is at most as big as Hamiltonian level of Kn-1 plus 1}. As for this embedding the inequality of \cref{prop: level number of Kn is at most as big as Hamiltonian level of Kn-1 plus 1} is sharp, its level number equals $\hl(K_{n-1})+1$.  As $\hl(K_{n-1})+1 = \lceil\frac{n-1}{2}\rceil +1 \ge \lceil \frac{n}{2}\rceil$ for $n\ge 5$, the statement follows.% As $n\ge 6$, no leveled embedding exist in this case. We now proceed by induction on $n$. For $n=6$, $\hl(K_6)=3$, which is also the number of levels we obtain by having exactly one vertex not belong to the spine, due to \cref{prop: level number of Kn is at most as big as Hamiltonian level of Kn-1 plus 1}. If two or more vertices belong to the same level but not to the spine, then $K_6$ (and also $K_n$
\end{proof}
Computing the (Hamiltonian) level numbers for complete graphs $K_n$ on few vertices directly, we get the following:
\begin{corollary}\label{cor: formula for level number of Kn}
    $$\hl(K_n)= \begin{cases}
        0, & \text{if}\ n=3\\
        \lceil \frac{n}{2}\rceil, & \text{if}\ n \ge 4.
    \end{cases}$$
    $$\ell(K_n) = \begin{cases}
     \infty, & \text{if}\ n<3\\
        0, & \text{if}\ n=3\\
        1, & \text{if}\ n=4\\
        \lceil \frac{n}{2}\rceil, & \text{if}\ n \ge 5. 
    \end{cases}$$
\end{corollary}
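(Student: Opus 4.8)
The plan is to split on $n$, using \cref{prop: level number of Kn} to settle everything for $n \ge 5$ and checking the degenerate small cases $n \le 4$ directly against the definitions. First I would dispose of $n \le 2$: then $K_n$ is a single vertex or a single edge, hence contains no cycle, so by \cref{def: leveled embedding} it admits no leveled embedding and $\ell(K_n) = \infty$ by \cref{def: level number spatial graph}; these graphs are not hamiltonian, which is why $\hl(K_n)$ is absent from the statement. For $n = 3$ I would observe that $K_3$ is itself a cycle, so using it as its own (hamiltonian) spine leaves no fragments at all; the level partition is empty, the number of levels is $0$, and since $0$ is the least possible value we get $\ell(K_3) = \hl(K_3) = 0$.

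Next I would treat $n = 4$, the one case where $\ell$ and $\hl$ diverge and which \cref{prop: level number of Kn} does not reach. Since $K_4$ is planar and contains a cycle, \cref{rmk: level number 1 iff spherical} gives $\ell(K_4) = 1$; one realizes this concretely by embedding $K_4$ in $S^2$, taking the boundary of a triangular face as spine, and noting that the unique remaining fragment (a copy of $K_{1,3}$) occupies a single level, as in the left of \cref{fig: K4 in one and two levels}. For $\hl(K_4)$ I would invoke the earlier characterization that a hamiltonian graph is planar iff its hamiltonian level number is at most $2$ and outerplanar iff it equals $1$: since $K_4$ is planar but not outerplanar, $\hl(K_4) = 2 = \lceil 4/2\rceil$, realized in the right of \cref{fig: K4 in one and two levels}; equivalently, the conflict graph of $K_4$ with respect to any hamiltonian cycle is $K_2$, of chromatic number $2$.

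Finally, for $n \ge 5$, \cref{prop: level number of Kn} directly gives $\hl(K_n) = \ell(K_n) = \lceil n/2\rceil$. Assembling all cases produces the two displayed formulas: $\hl(K_n)$ is $0$ at $n = 3$ and $\lceil n/2\rceil$ for every $n \ge 4$ (the value $2$ at $n=4$ matching $\lceil 4/2\rceil$), while $\ell(K_n)$ equals $\infty$, $0$, $1$, $\lceil n/2\rceil$ on the ranges $n < 3$, $n = 3$, $n = 4$, $n \ge 5$. I do not expect any real obstacle; the only care needed is the bookkeeping in the degenerate cases --- correctly reading ``contains no cycle'' as forcing $\ell = \infty$, and recognizing that $K_3$, being equal to its own spine, contributes $0$ levels rather than $1$.
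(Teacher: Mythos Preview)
Your proposal is correct and matches the paper's approach exactly: the paper states the corollary with no proof beyond the preceding sentence ``Computing the (hamiltonian) level numbers for complete graphs $K_n$ on few vertices directly, we get the following,'' relying on \cref{prop: level number of Kn} for $n\ge 5$ and leaving the small cases as direct checks. You have simply supplied those direct checks carefully, including the right reading of the degenerate cases $n\le 2$ and $n=3$, and the use of \cref{rmk: level number 1 iff spherical} and the outerplanarity criterion for $n=4$.
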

Even though the level number or the Hamiltonian level number of a graph is not necessarily smaller than that of a supergraph, as seen in \cref{exm: level number is not increasing}, we still have an upper bound in the extremal case where the supergraph is the complete graph:
\begin{proposition}
    Let $G$ be a Hamiltonian graph with $n\ge5$ vertices. Then $\ell(G) \le \hl(G) \le \hl(K_n)=\ell(K_n).$
\end{proposition}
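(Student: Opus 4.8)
The plan is to prove the three relations separately, the outer two being essentially free. The inequality $\ell(G)\le\hl(G)$ holds for every hamiltonian spatial graph directly from the definitions, since a hamiltonian spine is in particular an admissible spine, so minimising the number of levels over hamiltonian spines can only give something at least as large as minimising over all spines. The equality $\hl(K_n)=\ell(K_n)$ for $n\ge5$ is precisely \cref{prop: level number of Kn}. Hence everything reduces to the middle inequality $\hl(G)\le\hl(K_n)$, which I would prove using the characterization of the hamiltonian level number as the smallest chromatic number of a conflict graph over all hamiltonian cycles.

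First I would fix a hamiltonian cycle $C_G$ of $G$ and relabel the vertices of $K_n$ so that $V(G)=V(K_n)$, $E(G)\subseteq E(K_n)$, and $C_G$ becomes a hamiltonian cycle $C$ of $K_n$. Since $G$ and $K_n$ are hamiltonian, their fragments with respect to $C_G$ and $C$ are exactly the chords of the respective cycle, so the fragments of $G$ with respect to $C_G$ form a subset of the fragments of $K_n$ with respect to $C$. Two chords conflict exactly when their endpoints alternate around the cycle, which depends only on the cyclic order of the four endpoints along $C_G=C$; consequently the conflict relation restricted to the chords of $G$ is the same whether computed inside $G$ or inside $K_n$, i.e. the conflict graph of $G$ with respect to $C_G$ is an \emph{induced} subgraph of the conflict graph of $K_n$ with respect to $C$. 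Since the chromatic number does not increase when passing to an induced subgraph, and since $\hl(G)$ is the minimum of $\chi$ of the conflict graph over all hamiltonian cycles of $G$, we obtain
\[
\hl(G)\ \le\ \chi\bigl(\mathrm{conf}(G,C_G)\bigr)\ \le\ \chi\bigl(\mathrm{conf}(K_n,C)\bigr).
\]

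To finish, I would observe that $\chi(\mathrm{conf}(K_n,C))$ is independent of the hamiltonian cycle $C$: any permutation of $V(K_n)$ carrying one hamiltonian cycle to another is an automorphism of $K_n$ that sends chords to chords and preserves alternation, so all the graphs $\mathrm{conf}(K_n,C)$ are isomorphic and share the common value $\min_{C}\chi(\mathrm{conf}(K_n,C))=\hl(K_n)$. Chaining the displayed inequalities with this identity yields $\hl(G)\le\hl(K_n)$, and combining with the two free relations and $\hl(K_n)=\ell(K_n)$ gives the claim (note that $n\ge5$ is genuinely used only through \cref{prop: level number of Kn}).

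The argument has essentially no obstacle; the one place requiring attention is the word \emph{induced} above, i.e. verifying that deleting the chords of $K_n$ not in $G$ neither destroys nor creates conflicts among the surviving chords, which is immediate because a conflict between two single chords is determined by the cyclic positions of their four endpoints alone. A more geometric alternative would take an optimal divided-in-levels diagram of $K_n$, delete the arcs of the edges outside $G$, reduce the result, and check that the induced level partition has no more levels than the original; but this route forces one to argue that reducing a diagram cannot lengthen the over-crossing chains that govern the level of a fragment, so the conflict-graph proof is cleaner.
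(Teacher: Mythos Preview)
Your proof is correct and follows essentially the same route as the paper: both reduce the middle inequality to the fact that, for a fixed hamiltonian spine, the conflict graph of $G$ sits as an induced subgraph of that of $K_n$, so its chromatic number can only be smaller. You are simply more explicit than the paper about why the conflict graph of $K_n$ is independent of the chosen hamiltonian cycle (via the vertex-transitivity of $K_n$) and about why ``induced'' holds, points the paper leaves implicit in its relabeling step.
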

\begin{proof}
    The first inequality is a direct consequence of the definitions while the last equality is given in \cref{prop: level number of Kn}. We are left to prove that $\hl(G) \le \hl(K_n)$. Consider a spine which achieves the Hamiltonian level number of $G$, with vertices $v_1, \dots, v_n$ of $G$ in a cyclic ordering. As $K_n$ also has $n$ vertices, rename the vertices on a spine which achieves the Hamiltonian level number for $K_n$ as $v_1, \dots, v_n$ in the same cyclic ordering as for $G$. The Hamiltonian level number is equal to the minimum of the chromatic numbers of the conflict graphs ranging over all choices of spines. Fixing a spine, the chromatic number of the conflict graph does not decrease if edges are added to $G$ to obtain the complete graph $K_n$. The statement is therefore proven.
\end{proof}
 To prove the following closed formula for the (Hamiltonian) level number of complete bipartite graphs, we use
the classical result that all cycles in a bipartite graph have even length~\cite{MR1367739}.
\begin{theorem}\label{thm: level number of Knn}\hfill \begin{itemize}
    \item $\ell(K_{n,n}) = \hl(K_{n,n}) = n$ for $n \ge 3$;
    \item $\ell(K_{m,n}) = m$ for $m>n>2.$
    \item $\ell(K_{m,2})=1$ for any $m$.
\end{itemize}
\end{theorem}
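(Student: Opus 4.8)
The plan is to reduce all three items to two ingredients: a restriction on which cycles of $K_{m,n}$ can serve as the spine of a leveled embedding, and an Euler-formula bound on how many fragments a single level can carry. Throughout I assume $m\ge n$ (using $K_{m,n}\cong K_{n,m}$). The third item is immediate: for $m\ge 2$ the graph $K_{m,2}$ is planar and contains a cycle, so $\ell(K_{m,2})=1$ by \cref{rmk: level number 1 iff spherical}; the only missing case is the degenerate tree $K_{1,2}$.

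For the first two items the key step is to show that the only cycles $C$ of $K_{m,n}$ that can be the spine of a leveled embedding are those of length $2n$, i.e. those using all $n$ vertices of the smaller side. Suppose $C$ has length $2k$ with $k\le n$. If $k<n$, then the $k$ vertices of $C$ on each side span a $K_{k,k}$ and $K_{m,n}-V(C)$ is connected, so there is a unique ``large'' fragment $f_0$, and $f_0\cup C$ is exactly $K_{m,n}$ with the $k(k-2)$ chords of $C$ deleted, hence has $mn-k(k-2)$ edges on $m+n$ vertices. Since a leveled spine forces every fragment to embed in a disk with boundary $C$, the graph $f_0\cup C$ must be planar, and being bipartite on $m+n$ vertices it satisfies $mn-k(k-2)\le 2(m+n)-4$, equivalently $(m-2)(n-2)+1\le (k-1)^2$. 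But $k\le n-1$ gives $(k-1)^2\le(n-2)^2<(m-2)(n-2)+1$, the strict inequality holding because $m\ge n>2$; this is a contradiction. Hence every valid spine has length $2n$ (a hamiltonian cycle when $m=n$, and otherwise a cycle missing the $m-n$ pairwise non-adjacent vertices of the larger side). I expect this edge-counting step to be the conceptual heart of the argument.

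Next, fix such a spine and relabel so that $C=a_1b_1a_2b_2\cdots a_nb_n$. The fragments with respect to $C$ come in two flavours: the $m-n$ ``stars'' $a_j\cup\{b_1,\dots,b_n\}\cong K_{1,n}$, one for each $a_j$ not on $C$, and the $n(n-2)$ single-edge ``chords'' among $a_1,\dots,a_n,b_1,\dots,b_n$. In a leveled embedding all fragments lie inside the $2n$-gon $C$, and by the definition of the level partition the fragments of a single level $L_i$ do not cross over one another, hence (in a semi-leveled diagram) do not cross at all; so $C\cup L_i$ is a plane bipartite graph with $C$ bounding a face. Writing $c_i,s_i$ for the numbers of chord- and star-fragments in $L_i$, I will run Euler's formula together with ``every inner face has length $\ge 4$'' to obtain $c_i+(n-2)s_i\le n-2$. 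Thus a level carries at most $n-2$ chords, and a level carrying a star carries nothing else; placing all $n(n-2)$ chords therefore needs $\ge n$ levels, and the $m-n$ stars need $m-n$ further levels, for a total of $\ge m$ levels ($\ge n$ when $m=n$).

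Finally, the matching upper bounds are explicit. For $K_{n,n}$: take a hamiltonian cycle as spine and put on level $i$ the $n-2$ chords incident to $a_i$; they share the vertex $a_i$, hence are pairwise non-crossing, and stacking these $n$ levels yields a hamiltonian leveled embedding with $n$ levels, so $n\le\ell(K_{n,n})\le\hl(K_{n,n})\le n$ and both equal $n$. For $K_{m,n}$ with $m>n$: take the spine $C=a_1b_1\cdots a_nb_n$, reuse the $n$-level assignment above for the chords, and give each of the $m-n$ stars its own additional level, producing a leveled embedding with $\le m$ levels; with the lower bound this gives $\ell(K_{m,n})=m$, and since $K_{m,n}$ is not hamiltonian here there is nothing to prove about $\hl$. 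Combining the three items completes the proof.
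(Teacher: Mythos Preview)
Your proof is correct, and the overall skeleton (restrict the spine, bound the number of fragments per level, then exhibit a matching construction) is the same as the paper's. However, the two technical steps are carried out quite differently.

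For ruling out short spines, the paper argues geometrically: with a spine of length $2k<2n$, the unique ``big'' fragment must fit in a disk; placing one off-spine $A$-vertex and its edges to the on-spine $B$-vertices cuts the disk into $4$-gons, and then an off-spine $B$-vertex trapped in one of these $4$-gons cannot reach enough $A$-vertices. You instead use the bipartite planar edge bound $|E|\le 2|V|-4$ applied to $f_0\cup C$, turning the obstruction into the clean inequality $(m-2)(n-2)+1\le(k-1)^2$, which immediately fails for $k\le n-1$. This is shorter and handles $K_{m,n}$ and $K_{n,n}$ uniformly.

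For the lower bound on the number of levels, the paper exhibits explicit families of pairwise-conflicting chords, with a separate and somewhat delicate forcing argument when $n$ is even. Your approach is again counting: since same-level fragments do not cross, $C\cup L_i$ is a plane bipartite graph with the $2n$-gon $C$ as a face, and Euler's formula gives $c_i+(n-2)s_i\le n-2$. This single inequality simultaneously shows that each level carries at most $n-2$ chords and that a star must sit alone, so the $n(n-2)$ chords force $\ge n$ levels and the $m-n$ stars force $m-n$ more. The paper reaches the same conclusions for $K_{m,n}$ by reducing to $\hl(K_{n,n})$, but your route avoids the parity split and the case analysis entirely.

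One small remark: your observation that fragments in the same level of a leveled diagram are pairwise non-crossing is the crucial hinge for the Euler argument, and it is indeed immediate from the definition of the level partition (each $f\in L_i$ only crosses over fragments in $L_{<i}$, so two fragments of $L_i$ cannot cross at all in a semi-leveled diagram). It would be worth stating this explicitly as a lemma, since it is what makes the per-level planarity bound legitimate.
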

\begin{proof} \hfill
    \begin{itemize}
        \item Let $A=\{a_1, \dots, a_n\}$ and $B=\{b_1, \dots, b_n\}$ be the two maximal subsets of the $2n$ vertices of $K_{n,n}$ that are not connected by edges. We first prove that $\ell(K_{n,n}) = \hl(K_{n,n}).$ Consider a non-Hamiltonian cycle~$C$ of $K_{n,n}$. %As every cycle in $K_{n,n}$ is even and $K_{n,n}$ has an even number of vertices, the number of vertices that do not belong to~$C$ is also even. 
    Since $C$ alternates between vertices of $A$ and vertices of $B$, the vertices that do not belong to~$C$, are also equally divided between $A$ and $B$. For $K_{n,n}$ to be a leveled embedding with respect to spine $C$, all vertices not belonging to~$C$ must belong to the same level, since a vertex $a_i$ is adjacent to every vertex in $B$. Furthermore, vertices that do not belong to $C$ as well as all edges issuing from them must be embedded in a disk $D$ whose boundary coincides with $C$. Place one vertex that does not belong to~$C$, say $a_1$ without loss of generality, on $D$ and connect it to all vertices of $B$ that lie on~$C$, dividing $D$ into 4-gons. There is a second vertex not belonging to~$C$ which belongs to $B$, say $b_1$ without loss of generality, and since the embedding must contain all vertices not belonging to $C$ in one level, it must be placed in one of the 4-gons created by placing $a_1$. Therefore $b_1$ can be connected to at most two vertices of $A$. But $n\ge 3$, so $b_1$ is connected to at least 3 other vertices. Therefore, it is impossible to find a leveled embedding of $K_{n,n}$ with a non-Hamiltonian spine, and consequently their level number equals their Hamiltonian level number.  %that some vertices of $K_{n,n}$ do not belong to the spine.%all vertices of $K_{n,n}$ need to belong to the spine of a leveled embedding of $K_{n,n}$.

    We now prove that $\hl(K_{n,n})=n.$ Consider the vertices of $K_{n,n}$ to be on a cycle and label them $x_1, x_2, \dots, x_{2n}$, such that $x_i\in A$ if $i$ is odd and $x_i\in B$ if $i$ is even. All edges issuing from vertex $x_{2i+1} \in A$ can be assigned to the same level, and this holds for $i=0,\dots, n-1$, implying that $\hl(K_{n,n})\le n.$ We are left to show that $n$ levels are needed.
    If $n$ is odd, the edges $(x_1, x_{n+1}), (x_3, x_{n+3}), \dots, (x_{2n-1}, x_{n-1})$ all conflict with each other. Therefore, there must be at least $n$ levels.
    If $n$ is even, the edges $(x_1, x_{n}), (x_2, x_{n+1}), \dots, (x_{n-1}, x_{2n-2})$ all conflict with each other. Therefore, there must be at least $n-1$ levels. Assign edge $(x_i, x_{i+n-1})$ to level $l_i$ for $i=1, \dots, n-1$. The edge $e=(x_n, x_{2n-1})$ conflicts with the $n-1$ previous edges $(x_2, x_{n+1}), \dots, (x_{n-1}, x_{2n-2})$ but not with $(x_1, x_n)$. If we assign $e$ to level $l_n$, the proof is complete because $K_{n,n}$ has at least $n$ levels. Otherwise, we have to assign $e$ to level $l_1$. The edge $(x_{n+1}, x_{2n})$ is assigned to level $l_2$ since it conflicts with $(x_3, x_{n+2}), \dots, (x_{n}, x_{2n-1})$. We continue in this fashion, assigning edge $(x_{n+t}, x_{2n+t-1})$ to level $l_{t+1}$ for $t=0, \dots, n-2$, where the indices need to be considered modulo $2n$. Consider now the edge $(x_{2n-1}, x_{3n-2})= (x_{2n-1}, x_{n-2}).$ It conflicts with the $n-1$ edges $(x_{n+1}, x_{2n}), \dots, (x_{2n-2}, x_{3n-3})=(x_{2n-2}, x_{n-3})$, which are assigned to levels $l_2, \dots, l_{n-1}$. It also conflicts with edge $(x_1, x_n)$, assigned to level $l_1$. Therefore, at least $n$ levels are necessary.
    \item If $m> n$, $K_{m,n}$ has no Hamiltonian cycle. If $n>2$, an argument similar to the one showing that $\ell(K_{n,n})=\hl(K_{n,n})$ ensures that the $m-n$ vertices that do not belong to the spine must be assigned to different levels, and that no edge other than those issuing from each of those vertices can be assigned to the same level. Therefore, $\ell(K_{m,n})= (m-n) + \hl(K_{n,n})= m$.
    \item $K_{m,2}$ is a planar graph for any $m$, so its level number is equal to one by \cref{rmk: level number 1 iff spherical}.
    \end{itemize}
\end{proof}
\section{Outlook}
Proposition~\ref{prop: level number of Kn} and~\cref{thm: level number of Knn} characterize the level number and the Hamiltonian level number of complete graphs and complete bipartite graphs. Characterizing the (Hamiltonian) level number of other families of graphs is relevant for their cellular embedding possibilities. In particular, characterizing the spatial graphs that have a leveled embedding with level number smaller or equal than 4 would determine  spatial graphs that are guaranteed to admit cellular embeddings, due to Proposition~1 of \cite{LeveledCellularEmbeddings}.

When focusing on determining the Hamiltonian level number of a Hamiltonian graph, we can assume without loss of generality that all vertices of the Hamiltonian graph have degree three, i.e. the graph is cubic. Indeed, by Lemma~1 of \cite{LeveledCellularEmbeddings}, there exists a splitting of vertices that keeps the conflict graph unchanged. Since the number of levels of a leveled Hamiltonian spatial graph is given by the chromatic number of the conflict graph of the underlying abstract graph, the Hamiltonian level number is preserved as well.

The conflict graphs of cubic Hamiltonian graphs are known as circle graphs \cite{MR1367739}.
Therefore, all results on chromatic numbers of circle graphs directly translate to the Hamiltonian level number of a graph as upper bounds. For a Hamiltonian leveled graph~$\mathcal{H}$ with abstract graph~$H$, the chromatic number of the circle graph of $H$ equals the minimal level number of all leveled embeddings of $H$ that have the same spine. Thus the only difference between the chromatic number of the circle graph of $H$ and the Hamiltonian level number of $H$ is that the latter minimizes the chromatic numbers over all choices of Hamiltonian cycles as spine.

Therefore, studying the chromatic number of circle graphs helps determining the Hamiltonian level number of Hamiltonian graphs. However, determining the chromatic number of arbitrary circle graphs is NP-hard, as proved by Garey, Johnson, Miller and Papadimitriou~\cite{ChromaticCircleNPhard}.

\section{Acknowledgement}
The authors want to thank Riya Dogra for helpful discussions.

\section{Bibliography}
\bibliographystyle{alpha}
\bibliography{bibliography}
\end{document}